\theoremstyle{plain}
\newtheorem{thm}{Theorem}[section]
\newtheorem{lem}[thm]{Lemma}
\newtheorem{proposition}[thm]{Proposition}
\newtheorem{obs}[thm]{Observation} 
\newtheorem{conj}[thm]{Conjecture}
\crefname{thm}{Theorem}{Theorems}
\crefname{lem}{Lemma}{Lemmas}
\theoremstyle{definition}
\newcommand{\R}{\mathbb{R}}
\newcommand{\ex}{\textup{ex}}
    \def\@@and{}
\title{Longest cycles in vertex-transitive and highly connected graphs}
\author[C. Groenland]{Carla Groenland}
\address{Delft Institute of Applied Mathematics, Technische Universiteit Delft, 2628 CD Delft, Netherlands}
\email{c.e.groenland@tudelft.nl}
\urladdr{carlagroenland.com} 
\author[S. Longbrake]{Sean Longbrake}
\address{Department of Mathematics, Emory University, Atlanta, United States}
\email{sean.longbrake@emory.edu}
\urladdr{www.seanlongbrake.org}
\author[R. Steiner]{Raphael Steiner}
\address{Department of Computer Science, Institute of Theoretical Computer Science, ETH Z\"{u}rich, Switzerland}
\email{raphaelmario.steiner@inf.ethz.ch}
\urladdr{sites.google.com/view/raphael-mario-steiner}
\author[J. Turcotte]{J\'er\'emie Turcotte}
\address{Department of Mathematics and Statistics, McGill University, Montr\'eal, Canada}
\email{mail@jeremieturcotte.com}
\urladdr{www.jeremieturcotte.com}
\author[L. Yepremyan]{Liana Yepremyan}
\address{Department of Mathematics, Emory University, Atlanta, United States}
\email{liana.yepremyan@emory.edu}
\urladdr{www.lianayepremyan.com}
\thanks{The research of the third author was funded by the Swiss national Science Foundation Ambizione Grant No. 216071. The fourth author is supported by the Natural Sciences and Engineering Research Council of Canada (NSERC) and the Fonds de Recherche du Québec – Nature et technologies (FRQNT). Le quatrième auteur est supporté par le Conseil de recherches en sciences naturelles et en génie du Canada (CRSNG) et le Fonds de Recherche du Québec – Nature et technologies (FRQNT). The research of the fifth author is supported by the National Science Foundation grant DMS-2247013: Forbidden and Colored Subgraphs.}
\subjclass[2020]{Primary 05C38; Secondary 05C45, 05C60, 68V05}
\keywords{Hamiltonian cycles, Longest cycles, Vertex-transitive graphs, Highly connected graphs, Computer search, Linear programming}
\begin{document}

\begin{abstract}
We present progress on three old conjectures about longest paths and cycles in graphs. The first pair of conjectures, due to Lov\'{a}sz from 1969 and Thomassen from 1978, respectively, states that all connected vertex-transitive graphs contain a Hamiltonian path, and that all sufficiently large such graphs even contain a Hamiltonian cycle. The third conjecture, due to Smith from 1984, states that for $r\ge 2$ in every $r$-connected graph any two longest cycles intersect in at least $r$ vertices. In this paper, we prove a new lemma about the intersection of longest cycles in a graph which can be used to improve the best known bounds towards all the aforementioned conjectures: 
First, we show that every connected vertex-transitive graph on $n\geq 3$ vertices contains a cycle (and hence path) of length at least $\Omega(n^{13/21})$, improving on $\Omega(n^{3/5})$ from [DeVos, \emph{arXiv:2302:04255}, 2023]. 
Second, we show that in every $r$-connected graph with $r\geq 2$, any two longest cycles meet in at least $\Omega(r^{5/8})$ vertices, improving on $\Omega(r^{3/5})$ from [Chen, Faudree and Gould, \emph{J. Combin. Theory, Ser.~ B}, 1998]. Our proof combines combinatorial arguments, computer-search and linear programming. 
\end{abstract}

\maketitle

\section{Introduction}
One of the oldest and most fundamental research areas in graph theory is concerned with \emph{longest cycles}\footnote{Throughout this paper, a longest cycle in a graph will be defined as a cycle attaining the maximum possible length among all cycles in the graph.} in graphs. In particular, sufficient conditions for the Hamiltonicity of a graph, that is, the existence of a cycle of length $n$ in a graph on $n$ vertices, have been widely studied, going back to the classical result of Dirac~\cite{Dirac52} stating that every graph with minimum degree at least $n/2$ is Hamiltonian for all $n\geq 3$.  The minimum degree condition is tight in Dirac's theorem, so to obtain similar results for sparser graphs one needs to add additional constraints, such as connectivity conditions. For $2$-connected  regular graphs,  the degree condition was improved to $n/3$ by Jackson~\cite{Jackson} and later to the tight $n/3-1$ by Hilbig~\cite{Hilbig}, answering a question of Szekeres (see \cite{Jackson}). For $3$-connected regular graphs, asymptotically confirming a conjecture\footnote{Their conjecture was made for general $t$-connected $d$-regular graphs, $t\geq 3$ and said that  $d\geq n/t+1$ is sufficient to guarantee Hamiltonicity. However, this was disproved for $t\geq 4$ by by Jung~\cite{Jung} and independently  by Jackson, Li and Zhu~\cite{JLZ}.} independently made by Bollob\'as~\cite{Bollobas} and H\"aggkvist~\cite{Haggkvist}, K\"{u}hn, Lo, Osthus and Staden~\cite{KLOS} showed that minimum degree $n/4+o(n)$ is sufficient to guarantee the graph being Hamiltonian. More generally, in the same paper they also considered some sufficient conditions on the length of the longest cycle  of a graph. An old conjecture of Bondy~\cite{Bondy}, confirmed by Wei~\cite{Wei}, says that for all $r\geq 3$, every sufficiently large $2$-connected regular graph on $n$ vertices with degree at least $n/r$ has a cycle of length at least $2n/(r-1)$. K\"{u}hn, Lo, Osthus and Staden~\cite{KLOS} considered a generalization of this conjecture for $t$-connected graphs and showed that degree at least $n/r+o(n)$ guarantees a cycle of length at least $\min\{tn/(r-1), (n-o(n))\}$. 

In contrast to the dense case, sufficient conditions for Hamiltonicity that also address sparse graphs, with, say, a constant average degree, seem to be rather scarce. The positive results that apply to sparse graphs can be roughly clustered into three (mostly orthogonal) research directions: conditions related to pseudo-randomness such as expanders (see e.g.~the recent breakthrough~\cite{Draganic24}); toughness conditions~\cite{Bauer06,Chvatal}; and symmetry conditions such as vertex-transitivity. Note that for dense vertex-transitive  graphs, that is, of degree $d=\Theta(n)$, a lower bound of $(1-o(1))n$ on the maximum cycle length follows from the aforementioned results: Indeed,  every connected vertex-transitive graph of degree $d$ is $2(d + 1)/3$-connected~\cite{GR}, and thus the results from~\cite{KLOS} discussed above imply the existence of a cycle of length $(1-o(1))n$. 

In fact, even stronger results hold: Christofides, Hladk{\`y} and M\'ath\'e~\cite{CHM} showed that all connected vertex-transitive graphs of degree of order $\Theta(n)$ contain a Hamiltonian cycle. Our first main result concerns longest cycles in general connected vertex-transitive graphs.

There are two main open problems in the study of Hamiltonicity on vertex-transitive graphs that are commonly attributed to Lov\'{a}sz and Thomassen, respectively. The first is a conjecture posed by Lov\'asz~\cite{Lovasz69} in 1969 and asserts that every connected vertex-transitive graph admits a Hamiltonian path. The second is a conjecture by Thomassen from 1978 (see e.g.~\cite{Barat10}, Conjecture~1 and the references therein) and states that apart from finitely many exceptional graphs, all connected vertex-transitive graphs even contain a Hamiltonian \emph{cycle}. In fact, to this date only four connected vertex-transitive graphs on at least $3$ vertices are known which do not have a Hamiltonian cycle (one of which is the infamous Petersen graph). 
It is worth mentioning that Lov\'{a}sz originally posed his conjecture in the opposite form, but nowadays the above mentioned positive version has become standard. Curiously, Babai~\cite{Babai95} has proposed a conjecture contradicting Lov\'{a}sz conjecture, namely that there exists an absolute constant $\varepsilon>0$ such that for infinitely many $n$ there exists a vertex-transitive graph of order $n$ with no path of length larger than $(1-\varepsilon)n$.
Despite half a century of study, during which the conjectures of Lov\'{a}sz and Thomassen received much attention by the research community, they remain wide open today. In particular, for 44 years the best known general lower bound on the maximum length of a path or cycle in a connected vertex-transitive graph on $n\ge 3$ vertices remained $\Omega(\sqrt{n})$ due to Babai~\cite{Babai79}, and only recently this bound was improved to $\Omega(n^{3/5})$ by DeVos~\cite{deVos}. There is some further recent evidence of this conjecture being true though: indeed results of Draganić, Montgomery, Munh\'{a} Correia, Pokrovskiy and Sudakov~\cite{Draganic24}, together with earlier results on expansion properties of random Cayley graphs obtained by Alon and Roichman~\cite{AR}, imply that  random Cayley graphs with generator set of size at least $\Omega(\log{n})$ are Hamiltonian. For more related work, we refer to the surveys~\cite{Alspach81, Kutnar09} on the partial progress towards Lov\'{a}sz' and Thomassen's conjectures. As our first main result, we obtain the following improved lower bound for the length of longest cycles in vertex-transitive graphs.

\begin{thm}
\label{thm:long_cycle}
Every connected vertex-transitive graph on $n\geq 3$ vertices contains a cycle of
length at least $\Omega(n^{13/21})$.
\end{thm}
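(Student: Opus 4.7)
The plan is to follow the double-counting framework of Babai and DeVos, reducing the theorem to a lower bound on the pairwise intersection of longest cycles, and then to prove a new such intersection bound with the help of a linear program and a computer search. Let $G$ be a connected vertex-transitive graph on $n \geq 3$ vertices and let $C$ be a longest cycle in $G$, of length $\ell$. For a uniformly random automorphism $\sigma \in \operatorname{Aut}(G)$, the image $\sigma(v)$ of any fixed vertex $v$ is uniform on $V(G)$ by vertex-transitivity, so every vertex of $G$ belongs to $\sigma(V(C))$ with probability $\ell/n$; consequently
\[
\mathbb{E}_{\sigma}\bigl[|V(C) \cap \sigma(V(C))|\bigr] = \ell \cdot \frac{\ell}{n} = \frac{\ell^2}{n}.
\]
Since $\sigma(C)$ is a longest cycle for every $\sigma$, if a function $f$ satisfies $|V(C_1) \cap V(C_2)| \geq f(\ell)$ for every two longest cycles $C_1, C_2$ of $G$, then $\ell^2 \geq n \cdot f(\ell)$.

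Thus it suffices to prove the following intersection lemma, which I take to be the main new ingredient of the paper: in every $2$-connected graph, any two longest cycles of length $\ell$ share at least $\Omega(\ell^{5/13})$ vertices. Every connected vertex-transitive graph on $n \geq 3$ vertices is $2$-connected (in fact at least $2(d+1)/3$-connected for its degree $d \geq 2$), so substituting $f(\ell) = \Omega(\ell^{5/13})$ above yields $\ell^{21/13} \geq \Omega(n)$, i.e.\ $\ell \geq \Omega(n^{13/21})$, which is the claim. To prove the lemma itself, my plan is to take two longest cycles $C_1, C_2$ with $|V(C_1) \cap V(C_2)| = k$ and analyse the union $C_1 \cup C_2$: it decomposes into $2k$ internally vertex-disjoint arcs of lengths $a_1, \ldots, a_k$ (on $C_1$) and $b_1, \ldots, b_k$ (on $C_2$), with $\sum_i a_i = \sum_j b_j = \ell$. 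Every cycle contained in $C_1 \cup C_2$ has length at most $\ell$, and each such cycle yields a linear inequality in the $a_i$'s and $b_j$'s. I would package these constraints as a linear program whose optimum upper-bounds $\ell$ as a function of $k$, use a computer search to enumerate the combinatorially-distinct cycle patterns in $C_1 \cup C_2$ in order to generate the constraint set, and extract the exponent $5/13$ from an LP-dual certificate.

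The main obstacle is the intersection lemma itself, and in particular obtaining the sharp exponent $5/13$. The weaker $\Omega(\ell^{1/3})$ bound implicit in DeVos's $\Omega(n^{3/5})$ theorem uses only a thin family of cycle-swap inequalities in $C_1 \cup C_2$; improving it requires identifying a much richer collection of certificate cycles and balancing them against each other through duality. A secondary difficulty is keeping the underlying case analysis small enough to be verified by computer, yet general enough to cover every possible arc-intersection pattern; this is presumably where the LP formulation is essential, since a finite set of extremal configurations can be enumerated and LP duality then converts the combinatorial bound into a clean inequality applicable in every $2$-connected graph.
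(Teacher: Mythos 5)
Your double-counting step is correct and matches the random-automorphism argument the paper itself uses (Proposition~5.1), but the intersection lemma you hang everything on is false, and that is the central gap. You claim that in every $2$-connected graph, any two longest cycles of length $\ell$ share at least $\Omega(\ell^{5/13})$ vertices. This is not true, and no bound of the form $k\ge f(\ell)$ with $f$ unbounded can be true in the class of $2$-connected graphs. Take two vertices $u,v$ joined by four internally disjoint paths, each of length $\ell/2$. This is $2$-connected, every cycle through a pair of the four paths has length $\ell$, these are the longest cycles, and any two of them intersect in exactly two vertices regardless of how large $\ell$ is. So for this family the random-automorphism inequality $\ell^2/n \ge k$ can only be used with $k=2$, which just reproduces Babai's $\Omega(\sqrt n)$ bound; it cannot push the exponent past $1/2$ on its own.

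The paper's key lemma is of a qualitatively different kind. It bounds the size of a minimum vertex \emph{separator} $S$ between $V(C_1)\setminus V(C_2)$ and $V(C_2)\setminus V(C_1)$ by $\ex(2k,\{Q_3^+,K_{3,3}\})=O(k^{8/5})$, which is a statement about the ambient graph $G$ (through Menger's theorem applied to paths outside the two cycles), not just about the union $C_1\cup C_2$. From this, one deduces not a lower bound on $k$ in terms of $\ell$, but an upper bound $s(k)=O(k^{8/5})$ on the size of a \emph{longest-cycle transversal} (namely $S\cup(V(C_1)\cap V(C_2))$). The vertex-transitive lower bound then comes from balancing two competing probabilistic bounds, $\ell\ge\sqrt{kn}$ (if the minimum pairwise intersection is $k$) and $\ell\ge n/s(k)$ (since a transversal of size $s(k)$ must hit the random image $\sigma(C)$), with the minimum over $k$ attained at $k=\Theta(n^{5/21})$, giving $\ell\ge\Omega(n^{13/21})$. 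Your LP sketch is also narrower than what the paper actually solves: you propose to impose only the inequalities coming from cycles contained in $C_1\cup C_2$, but the configuration graphs whose LPs the paper checks are built from $C_1\cup C_2$ together with an extra system of $\Omega(k^{8/5})$ vertex-disjoint cross paths obtained via Menger, and it is precisely those cross edges (forced to have weight $\ge 1$) that make the linear programs infeasible. Without them, the constraints from $C_1\cup C_2$ alone are satisfiable with $k=2$, as the four-path example shows, and no exponent better than $1/2$ can come out.

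To repair the argument you would need, at minimum, to (i) replace the false intersection lemma by the separator lemma and the transversal construction, and (ii) restore the second branch of DeVos's two-sided bound, $\ell \ge n/s(k)$, rather than relying on $\ell\ge\sqrt{kn}$ alone.
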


The second main result of our paper addresses a related old conjecture about longest cycles in graphs from 1984, which is commonly attributed to Smith. 

\begin{conj}[cf.~Conjecture 5.2 in~\cite{Groetschel84} and Conjecture 4.15 in~\cite{Bondy95}]
For all $r \ge 2$, every two longest cycles in an $r$-connected graph meet in at least $r$ vertices. 
\end{conj}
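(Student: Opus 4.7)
The plan is to proceed by contradiction: let $C_1, C_2$ be two longest cycles in an $r$-connected graph $G$, set $I := V(C_1) \cap V(C_2)$, and suppose $|I| = k < r$. Label $I = \{x_1, \ldots, x_k\}$ in the cyclic order in which they appear on $C_1$, write $A_i$ for the arc of $C_1$ from $x_i$ to $x_{i+1}$ (indices modulo $k$), and analogously let $B_1, \ldots, B_k$ be the arcs of $C_2$ ordered by their cyclic appearance on $C_2$. The central paradigm is a \emph{swap argument}: if we can substitute some arc of $C_1$ with a strictly longer path in $G$ that is internally disjoint from $V(C_1)$, then the resulting cycle contradicts the maximality of $|C_1|$; the symmetric statement applies with $C_1$ and $C_2$ exchanged.

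To generate such swaps, I would exploit $|I| = k < r$ via Menger's theorem: $G - I$ remains connected, and in fact there are many internally disjoint paths between the interiors of arcs of $C_1$ and of $C_2$ that avoid $I$. Concatenating such a path with part of an arc of $C_2$ yields a replacement path for an arc of $C_1$, and the ``no longer cycle'' condition forces its length to be bounded in terms of the original arc length $|A_i|$. Aggregating these local obstructions over all compatible pairs of arcs produces a linear program whose variables are the arc lengths $|A_i|, |B_j|$ and whose constraints encode all cycle-length maximality conditions. A lower bound on the optimum of this LP, computed explicitly (or, for small $k$, via computer search to identify additional extremal constraints), should then translate into the desired lower bound on $k$.

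The main obstacle, and the reason the conjecture remains open in its full strength, is that the LP depends on the unknown cyclic permutation relating the order of $I$ on $C_1$ to its order on $C_2$. For adversarially chosen permutations the LP's value can be as small as $r^c$ with $c<1$ by balancing many short arcs against a few long ones; this is exactly the phenomenon that forces earlier works to exponents such as $3/5$ (or $5/8$ in the present paper) rather than the tight exponent $1$. Closing the gap to $k \geq r$ would require either an additional combinatorial argument that rules out the worst cyclic orderings, or a richer class of valid inequalities coming from rerouting patterns that involve three or more simultaneous swaps; this is where I expect the proof to be hardest.
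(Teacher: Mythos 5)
You have correctly flagged that this is Smith's conjecture, which remains open; the paper does not prove it. The paper's actual contribution toward the conjecture is the lower bound $\Omega(r^{5/8})$ on the number of common vertices of two longest cycles in an $r$-connected graph, obtained as a corollary of a separator lemma.

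Your high-level plan tracks the paper's real argument for that weaker bound quite closely: Menger's theorem to produce a large vertex-disjoint path system between $V(C_1)\setminus V(C_2)$ and $V(C_2)\setminus V(C_1)$, a rerouting/swap argument to derive inequalities on arc lengths, and a computer-aided LP feasibility check over ``skeletons.'' The paper's skeleton is a configuration graph built on an ordered bipartite graph $H$ whose two sides are the components of $C_1 - V(C_2)$ and $C_2 - V(C_1)$, with an edge for each disjoint path between a pair of such components; the computer search verifies that whenever $H$ contains $Q_3^+$ or $K_{3,3}$, then for every weight assignment some rerouting produces a strictly longer good cycle.

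One part of your diagnosis is off, though. The exponent $5/8$ does not arise from an LP optimum shrinking for adversarial cyclic permutations. Rather, it comes from extremal graph theory: since $H$ is bipartite on at most $2k$ vertices and every copy of $Q_3^+$ or $K_{3,3}$ inside $H$ would force a longer cycle, the number of vertex-disjoint paths, and hence the minimum separator size $|S|$ between $V(C_1)\setminus V(C_2)$ and $V(C_2)\setminus V(C_1)$, is at most $\ex(2k,\{Q_3^+,K_{3,3}\}) = O(k^{8/5})$, and $r$-connectivity forces $|S| \ge r$, giving $k = \Omega(r^{5/8})$. The earlier $\Omega(r^{3/5})$ bound of Chen, Faudree, and Gould is the same mechanism with $H = K_{3,257}$, whose Tur\'an number is $O(n^{5/3})$. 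So to improve the exponent, let alone reach the conjectured linear bound, one must find a denser unavoidable bipartite pattern with a smaller Tur\'an exponent that still forces a longer cycle in every configuration, not merely tighten the LP. The cyclic-permutation difficulty you describe is real but is already absorbed into the enumeration over all ordered configurations in the computer search; the true bottleneck is identifying the right forbidden pattern $H$.
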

Smith's conjecture has been verified for all values $r\le 6$ by Gr\"{o}tschel~\cite{Groetschel84} and for $r\in\{7,8\}$ by Stewart and Thompson~\cite{stewart1995}. Furthermore, Gutiérrez and Valqui~\cite{Gutierrez24} recently proved the conjecture for $r \ge (n+16)/7$. Regarding general values of $r$, Burr and Zamfirescu (see~\cite{Bondy95}) proved that every two longest cycles in an $r$-connected graph must meet in at least $\sqrt{r}-1$ vertices, and this bound was improved to $\Omega(r^{3/5})$ by Chen, Faudree and Gould~\cite{ChenFaudreeGould98} in 1998. For further background on Smith's conjecture and related problems, we refer to the survey article~\cite{Shabbir13} on intersections of longest cycles and paths in graphs. Our second main result gives the first asymptotic improvement of the lower bound towards Smith's conjecture since the result of Chen, Faudree and Gould from 26 years ago.

\begin{thm}
\label{thm:rconnected_corollary}
    There is a constant $C>0$ such that for all $r\geq 2$, every two longest cycles in an $r$-connected graph meet in at least $Cr^{5/8}$ vertices.
\end{thm}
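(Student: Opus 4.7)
My plan is to derive \Cref{thm:rconnected_corollary} from the central intersection lemma for longest cycles that the paper announces. Let $G$ be an $r$-connected graph with $r\geq 2$, let $C_1,C_2$ be two longest cycles in $G$ of common length $\ell$, and set $I=V(C_1)\cap V(C_2)$, $k=|I|$. The goal is to show $k\geq Cr^{5/8}$. The $k$ vertices of $I$ decompose $C_1$ into arcs $A_1,\dots,A_k$ and $C_2$ into arcs $B_1,\dots,B_k$, each recorded together with its endpoints in $I$ and its length. The skeleton of the argument is then: use $r$-connectivity to produce many disjoint transversals; use the longest-cycle property to derive arc-length inequalities via exchanges; finally combine the inequalities to lower bound $k$.

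For the connectivity step, Menger's theorem gives, for any suitable nonempty union $S$ of arcs of $C_1$, at least $r$ internally disjoint paths from the interior of $S$ to its complement in $G$, whose endpoints can be tracked as landing in prescribed collections of arcs of $C_1$ and $C_2$. For the exchange step, given a transversal path $P$ from the interior of some $A_i$ to the interior of some $B_j$, one constructs a candidate cycle by replacing a portion of $C_1$ with $P$ followed by a piece of $C_2$; since $C_1$ is longest, the resulting cycle is no longer, which yields a linear inequality relating $|A_i|$, $|B_j|$, $|P|$, and the borrowed $C_2$-piece. Running this over the $r$ disjoint transversals, and more generally over tuples of arcs subjected to simultaneous swaps, produces a linear system of constraints on the vector of arc lengths.

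The final step is to bound $k$ from below by feasibility of this system. A crude version of this argument, using only single-arc swaps together with the basic $r$-flow between $V(C_1)\setminus I$ and $V(C_2)\setminus I$, recovers the $r^{3/5}$ bound of Chen--Faudree--Gould. The improvement to $r^{5/8}$ should come from the refined intersection lemma that the abstract advertises, which encodes more delicate configurations of several arcs simultaneously and their possible overlap patterns with the disjoint transversals. This is where I expect the main obstacle to lie: one must identify exactly which multi-arc configurations force extra inequalities, encode them in a finite linear program, and argue that the LP is infeasible whenever $k<Cr^{5/8}$. Given the mention of computer search and linear programming in the abstract, I would set up the LP for small parameter values to conjecture the sharp exponent, identify which exchange moves are tight, and then certify the lower bound in general by exhibiting explicit dual multipliers.
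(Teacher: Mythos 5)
Your proposal outlines a strategy for re-proving the entire pipeline (including the key lemma) rather than deducing \cref{thm:rconnected_corollary} from the paper's central lemma, which is much simpler than what you describe. The paper's \cref{lem:separator_size} asserts: if $C_1,C_2$ are longest cycles with $k:=|V(C_1)\cap V(C_2)|>0$ and $S$ is a minimum vertex separator between $A:=V(C_1)\setminus V(C_2)$ and $B:=V(C_2)\setminus V(C_1)$, then $|S|\leq \ex(2k,\{Q_3^+,K_{3,3}\})=O(k^{8/5})$. Once you have this statement, the theorem falls out in three short lines: $r$-connectivity forces any $A$-$B$ vertex separator to have size at least $r$ (this is where Menger enters, once, as a black box), the lemma bounds the same quantity by $O(k^{8/5})$, and combining gives $k\geq \Omega(r^{5/8})$. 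Your proposal never articulates that the lemma is a bound on a \emph{separator} and never invokes the $r$-connectivity lower bound on that separator; instead it launches into arc decompositions, exchange inequalities, disjoint transversals and dual certificates, which belong to the \emph{proof} of the lemma, not its application.

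Two further gaps: you never verify the hypothesis $k>0$. The paper handles it via the standard observation that two vertex-disjoint cycles joined by two disjoint paths (which exist since $r\geq 2$) can be combined into a cycle longer than both, contradicting maximality. Second, even within your sketch, the connectivity step is stated loosely (``at least $r$ internally disjoint paths from the interior of $S$ to its complement'') — what is actually needed is a single clean application of Menger to the pair $(A,B)$, not a family of Menger applications indexed by arc unions. Your high-level intuition about where the $8/5$ exponent comes from (the Tur\'an number of $Q_3^+$) and the LP machinery is reasonable as a description of the lemma's proof, but as written the proposal does not constitute a proof of the theorem given the lemma.
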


We obtain both our main results, Theorem~\ref{thm:long_cycle} and Theorem~\ref{thm:rconnected_corollary}, from the following key lemma, which can be seen as a ``local'' version of Theorem~\ref{thm:rconnected_corollary}. We denote by $Q_3^+$ the bipartite graph obtained from the 
$3$-dimensional hypercube $Q_3$
by adding one of the diagonals. Alternatively, $Q_3^+$ is the graph obtained from $K_{4,4}$ by removing a matching of size $3$. 

\begin{lem}
    \label{lem:separator_size}
    Let $C_1$ and $C_2$ be two longest cycles in a graph $G$. If $k:=|V(C_1)\cap V(C_2)|>0$ and $S$ is a minimum vertex separator between $A:=V(C_1)\setminus V(C_2)$ and $B:=V(C_2)\setminus V(C_1)$ in $G$, then
$$|S|\leq \textup{ex}(2k,\{Q_3^+,K_{3,3}\})=O(k^{8/5}).$$
\end{lem}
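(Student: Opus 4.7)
The plan is to reduce the bound on $|S|$ to a Turán-type extremal estimate for an auxiliary bipartite graph built from a Menger-style system of crossing paths, then rule out the forbidden substructures via a cycle-surgery argument.

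\textbf{Setup.} By Menger's theorem, $|S|$ equals the maximum number of pairwise vertex-disjoint paths from $A$ to $B$ in $G$; fix such a family $\mathcal P = \{P_1,\dots,P_m\}$ with $m=|S|$, where $P_i$ has endpoints $a_i\in A$ and $b_i\in B$. The $k$ common vertices in $I:=V(C_1)\cap V(C_2)$ partition $C_1$ into $k$ arcs $\alpha_1,\dots,\alpha_k$ (each with both endpoints in $I$ and interior contained in $A$), and likewise partition $C_2$ into $k$ arcs $\beta_1,\dots,\beta_k$ whose interiors lie in $B$. Thus each $a_i$ lies in the interior of a unique arc $\alpha_{f(i)}$ and each $b_i$ in a unique arc $\beta_{g(i)}$. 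Build an auxiliary bipartite graph $H$ on the $2k$ vertices $\{\alpha_1,\dots,\alpha_k\}\sqcup\{\beta_1,\dots,\beta_k\}$ by placing the edge $\{\alpha_{f(i)},\beta_{g(i)}\}$ for each $i\in\{1,\dots,m\}$.

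\textbf{Key claim.} The graph $H$ is simple, and it contains no copy of $K_{3,3}$ and no copy of $Q_3^+$. Given this claim, $m=|E(H)|\le \textup{ex}(2k,\{Q_3^+,K_{3,3}\})$, and the standard K\H{o}v\'ari--S\'os--Tur\'an style argument (together with the additional density savings afforded by excluding $Q_3^+$) gives the quantitative bound $O(k^{8/5})$, completing the proof. The proof of the claim is by contradiction: for each forbidden configuration, the corresponding disjoint paths in $\mathcal P$ together with portions of $C_1$ and $C_2$ can be spliced into a single cycle whose length strictly exceeds $\max\{|C_1|,|C_2|\}$, contradicting the maximality of $C_1$ and $C_2$. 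Concretely, each forbidden configuration on arcs $\alpha_{i_1},\alpha_{i_2},\ldots$ and $\beta_{j_1},\beta_{j_2},\ldots$ provides several disjoint ``bridges'' joining the $C_1$-side to the $C_2$-side; a candidate longer cycle is then obtained by choosing, on each participating arc, which of its two sub-arcs to retain, and, on each bridge, a direction of traversal, consistent with the cyclic orderings along $C_1$ and $C_2$. A counting argument lower bounds the total length of such a cycle in terms of the discarded sub-arc lengths, and one verifies that \emph{some} admissible choice always recovers more than was discarded, contradicting that $|C_1|,|C_2|$ were maximum.

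\textbf{Where computer search enters.} Ruling out multi-edges and the simpler $K_{3,3}$ substructure can be expected to follow from short direct constructions in the spirit of Chen--Faudree--Gould and DeVos. The main obstacle is ruling out $Q_3^+$: here the configuration involves seven bridges among $4+4$ arcs, and the number of ways of choosing sub-arcs and bridge orientations explodes. The existence of a winning choice becomes a feasibility question that can be encoded as a linear program whose variables are the retained sub-arc lengths and whose constraints encode ``the new walk is a single cycle'' and ``the new cycle is longer''; verifying that no assignment of arc lengths makes every admissible cycle short is the content of the computer-assisted step advertised in the abstract.

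\textbf{Conclusion.} Putting the claim together with the extremal bound $\textup{ex}(2k,\{Q_3^+,K_{3,3}\})=O(k^{8/5})$ yields $|S|=m=O(k^{8/5})$, as required. The principal difficulty of the argument lies not in the Menger reduction nor in the extremal estimate, but in certifying that no $Q_3^+$ pattern of bridges can survive between longest cycles.
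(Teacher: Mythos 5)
Your proposal is correct and takes essentially the same route as the paper: Menger's theorem to get the disjoint path family, the bipartite auxiliary graph on the arcs of $C_1$ and $C_2$, simplicity and $\{Q_3^+,K_{3,3}\}$-freeness of that graph via cycle surgery (with the $Q_3^+$ case delegated to a computer-checked linear program ranging over all arc orderings and edge weights), and then $\textup{ex}(n,Q_3^+)=O(n^{8/5})$. The one point you leave informal, and where the paper's combinatorial care actually lies, is that not every spliced walk is a cycle in $G$: only those cycles in the auxiliary configuration graph whose arcs on at least one of $C_1,C_2$ stay inside a single component of $C_i-V(C_{3-i})$ (the paper's ``good'' cycles) lift to genuine vertex-disjoint cycles in $G$, and it is this restriction — not a free ``is a single cycle'' side-condition — that the linear program quantifies over.
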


The proof of this lemma is the main bulk of work in this paper. It is partly computer-assisted and will be presented in Sections~\ref{sec:defs} and~\ref{sec:lemmaproof}. 
The upper bound on the extremal function $\textup{ex}(n,\{Q_3^+,K_{3,3}\})=O(n^{8/5})$ follows from the bound $\textup{ex}(n,Q_3^+)\le O(n^{8/5})$ that was first proved by Erd\H{o}s and Simonovits~\cite{ES-cubewithdiagonal}, see also the recent work of F\"{u}redi~\cite{Furedi13} for an independent proof and related results.

Lemma~\ref{lem:separator_size} strengthens a corresponding result by DeVos (see the proof of Lemma~3 in~\cite{deVos}), who proved that $S$  as in Lemma~\ref{lem:separator_size} must satisfy  $|S|=O(k^2)$ instead of $|S|=O(k^{8/5})$; this is obtained by a simple application of Menger's theorem. 

Note that using the upper bound of $\textup{ex}(n,\{Q_3^+,K_{3,3}\})\leq \textup{ex}(n,Q_3^+)$ is not necessarily wasteful, as it may seem at first. Namely, a well-known open conjecture by Erd\H{o}s and Simonovits~\cite{ES} states that for any finite family of graphs $\mathcal{F}$ the Tur\'{a}n number of the family $\mathcal{F}$ is within a constant factor of the minimum among the Tur\'{a}n numbers of members of $\mathcal{F}$ (see Conjecture~1 in ~\cite{ES}). Since it is known that $\textup{ex}(n,K_{3,3})=(\frac{1}{2}+o(1))n^{5/3}$ (see~\cite{brown66, furedi96}), this would imply that $\textup{ex}(n,\{Q_3^+,K_{3,3}\})$ and $\textup{ex}(n,Q_3^+)$ are of the same order.

\section{Proof of Theorem~\ref{thm:long_cycle} and Theorem~\ref{thm:rconnected_corollary}. }

In this section we explain how to deduce both Theorems~\ref{thm:long_cycle} and~\ref{thm:rconnected_corollary} from Lemma~\ref{lem:separator_size}. We start by recording the following folklore-observation about intersections of longest paths and cycles in graphs, which we will need repeatedly, see for example~\cite{zamfirescuagain} for a reference.
\begin{obs}\label{obs:intersect}
Let $G$ be a connected graph. Then any two longest paths in $G$ meet in some vertex. Additionally, if $G$ is $2$-connected, then any two longest cycles in $G$ meet in some vertex.
\end{obs}
Let us now continue by first proving Theorem~\ref{thm:rconnected_corollary}.
\begin{proof}[Proof of Theorem~\ref{thm:rconnected_corollary}] Let $G$ be an $r$-connected graph, and let $C_1, C_2$ be two longest cycles in $G$. Since $r\ge 2$ by assumption, Observation~\ref{obs:intersect} implies that the cycles $C_1$ and $C_2$ must overlap in at least one vertex.
This implies that $k:=|V(C_1)\cap V(C_2)|>0$. So by Lemma~\ref{lem:separator_size}, a smallest vertex-separator $S$ between $V(C_1)\setminus V(C_2)$ and $V(C_2)\setminus V(C_1)$ in $G$ must have size at most $|S|\le O(k^{8/5})$. On the other hand, since $G$ is $r$-connected, we must have $|S|\ge r$, implying $k\ge \Omega(r^{5/8})$, as desired. 
\end{proof}

Let us now turn to the deduction of Theorem~\ref{thm:long_cycle}, which is slightly more elaborate. 
\begin{proof}[Proof of Theorem~\ref{thm:long_cycle}]
In the following, let us define a \emph{longest cycle transversal} as a set $S\subseteq V(G)$ which intersects every longest cycle of $G$. 
For $k\in \mathbb{N}$, let $s(k)$ denote the smallest integer $s$ such that every 2-connected graph $G$ containing two longest cycles $C_1$ and $C_2$ such that $|V (C_1)\cap V(C_2)| = k$ has a longest cycle transversal of size at most $s$.  DeVos~\cite[Lemma 3]{deVos} showed that $s(k)\leq k^2+k$, and the remainder of the proof in~\cite{deVos} directly shows that the length of the longest cycle in a connected vertex-transitive graph on at least $n\geq 3$ vertices is at least 
\begin{equation}
\label{eq:deVos}
\min_{k\in \mathbb{N}}\max\left\{\sqrt{kn},\frac{n}{s(k)}\right\}.
\end{equation}
We next show how to obtain the improved upper bound $s(k)=O(k^{8/5})$ from Lemma~\ref{lem:separator_size}. Given two longest cycles $C_1$ and $C_2$ in a $2$-connected graph $G$ with $|V(C_1)\cap V(C_2)|=k$, by Lemma~\ref{lem:separator_size} there exists a vertex-separator $S$ of size at most $O(k^{8/5})$ between $V(C_1)\setminus V(C_2)$ and $V(C_2)\setminus V(C_1)$ in $G$. We now claim that $S\cup (V(C_1)\cap V(C_2))$ (which is of size $\le |S|+k=O(k^{8/5})$) intersects all longest cycles in $G$. Indeed, for any longest cycle $C$ in $G$, since $G$ is $2$-connected and since by Observation~\ref{obs:intersect} any two longest cycles in a $2$-connected graph must intersect each other, we must have $V(C)\cap V(C_i)\neq \emptyset$ for $i=1,2$. Hence, either $C$ contains a vertex in $V(C_1) \cap V(C_2)$ or a segment of $C$ forms a path from $V(C_1)\setminus V(C_2)$ to $V(C_2)\setminus V(C_1)$ in $G$, in which case this segment (and thus $C$) must be hit by $S$. This shows that indeed, $s(k)\le O(k^{8/5})$.

Plugging our improved bound $s(k)=O(k^{8/5})$ into (\ref{eq:deVos}) yields Theorem \ref{thm:long_cycle}. Indeed, let $C>0$ be an absolute constant such that $s(k)\le Ck^{8/5}$ for all $k\ge 1$. The minimum of the expression $$\max\left\{\sqrt{kn},\frac{n}{Ck^{8/5}}\right\}$$ among real values $k>0$ is achieved for a value of $k$ for which the two bounds $\sqrt{kn}$ and $n/(Ck^{8/5})$ are equal, and setting
\[
\sqrt{kn}=\frac{n}{Ck^{8/5}}
\]
yields
\[
k=n^{(1-1/2)/(1/2+8/5)}/C^{1/(1/2+8/5)}=\Theta(n^{\frac{5}{21}}).
\]
Plugging this into the above then gives the desired lower bound of $\sqrt{kn}=\Theta(n^{1/2+5/42})=\Theta(n^{13/21})$ on the length of a longest cycle. This proves Theorem~\ref{thm:long_cycle}.
\end{proof}

\section{Reduction to computer search}\label{sec:defs}
In this section, we introduce definitions that are required to formalize our computer-based approach and prove \cref{lem:weightlongcycle}, which will be used to prove Lemma \ref{lem:separator_size}. Of course, we cannot simply consider all possible graphs in our computer search, and rather do a brute-force search over the possible ``skeletons'' that may arise.

While reading the definitions in this section, the reader may wish to keep the following construction of an auxiliary bipartite graph in mind as motivation. Given a graph $G$ with two longest cycles $C_1$ and $C_2$ and a collection of vertex-disjoint paths $\mathcal{P}$ from $C_1 - V(C_2)$ to $C_2 - V(C_1)$, we may define a bipartite graph as follows. The vertices are the connected components of $C_1 -V(C_2)$ and $C_2 - V(C_1)$, and there is an edge between two such components if there is a vertex-disjoint path $P$ between them with $P\in \mathcal{P}$.

\medskip

An \emph{ordered bipartite graph} is a triple $\mathcal H=(H,\sigma_X,\sigma_Y)$, where
\begin{itemize}
    \item $H=(X,Y,E)$ is a bipartite graph,
    \item $\sigma_X$ is a total order on $X$, and
    \item $\sigma_Y$ is an total order on $Y$.
\end{itemize}

Given an ordered bipartite graph $\mathcal H=(H,\sigma_X,\sigma_Y)$, an \emph{$\mathcal H$-configuration} is a tuple $\tau=(\tau_h)_{h\in V(H)}$, where $\tau_h$ is a total order on the neighbours of $h$ for each $h\in V(H)$. 
Associated with this configuration, we create what we call a \emph{configuration graph}, denoted by $G_\tau$, defined as follows. First we create a vertex $v_{x,y}$ for each $x\in X$ and $y\in N_H(x)$ and let $C_1^\tau :=\{v_{x,y} : x\in X, ~ y\in N_{H}(x)\}$. We turn this into a cycle using the orders $(\sigma_X,(\tau_x)_{x\in X})$ ``lexicographically''. That is, we consider the total order $\leq_{C_1^\tau}$ obtained by 
\begin{align*}
    v_{x_1,y_1}\leq_{C_1^\tau} v_{x_2,y_2} & \quad \text{if $x_1<_{\sigma_X} x_2$},\\
     v_{x_1,y_1}\leq_{C_1^\tau} v_{x_1,y_2} & \quad \text{if $y_1\leq_{\tau_{x_1}} y_2$}.
\end{align*}
We place edges to turn $C_1^\tau$ into a cycle\footnote{If $C_1^\tau$ has only one vertex, this yields a loop in the graph, and if $C_1^\tau$ has two vertices, this creates parallel edges.}, adding an edge between the smallest and largest vertex in this order and otherwise adding edges between vertices that are consecutive in the total order.
Although $C_1^\tau$ is undirected, we also fix a cyclic order on it, which arises naturally by following the order described above. 
We define $C_2^\tau$ analogously by switching the roles of $X$ and $Y$. Finally, we define $G_\tau$ as the union of $C_1^\tau$ and $C_2^\tau$ together with the edges $\{\{v_{x,y},v_{y,x}\} : \{x,y\} \in E(H)\}\}$. We will call these ``edges corresponding to $H$'' the \emph{cross edges}.
We call $C_1^\tau$ and $C_2^\tau$ the \emph{main cycles} of $G_\tau$.

\medskip

Roughly speaking, our goal in this section is to show that when $G_\tau$ is a configuration graph for $\tau$ an $(H,\sigma_X,\sigma_Y)$-configuration, where $H\in \{Q_3^{+},K_{3,3}\}$, then there exists no assignment of ``lengths'' to its edges such that $C_1^{\tau}$ and $C_2^{\tau}$ are both longest cycles. In fact, provided that $C_1^\tau$ and $C_2^\tau$ have the same length we will always be able to find a longer cycle with special properties. The latter will allow us to also find a longer cycle in the setting of \cref{lem:separator_size} in our next section.

Given a configuration graph $G_\tau$, we say an edge $\{v_{x_1,y_1},v_{x_2,y_2}\}$ of $C_1^\tau$ is \emph{dangerous} if $x_1\neq x_2$, and analogously for the edges of $C_2^\tau$. Edges of $C_1^\tau$ and $C_2^\tau$ are otherwise said to be \emph{safe}. We say a cycle in $G_\tau$ is \emph{good} if it has no dangerous edges from $C_1^\tau$ or has no dangerous edges from $C_2^\tau$ (or from neither).

We also need an appropriate formal definition for the length of cycles in configuration graphs. Given a configuration graph $G_\tau$, we say a \emph{weight function} on it is a function $w:E(G_\tau)\rightarrow \mathbb{R}_{\geq 0}$ such that $w(e)\geq 1$ for every cross edge $e$, and  such that $w(C_1^\tau)=w(C_2^\tau)$, where we define the weight of any subgraph $J$ of $G_\tau$ by $w(J):=\sum_{e\in E(J)}w(e)$. We say a cycle $C$ in a weighted configuration graph $(G_\tau,w)$ is \emph{long} if its weight is strictly greater than that of the main cycles of $G_\tau$, that is if $w(C)>w(C_1)=w(C_2)$.

The next lemma will be useful for checking for the existence of good long cycles independent of the weight function.

\begin{lem}\label{lem:pathcheck}
    Let $\mathcal H=(H,\sigma_X,\sigma_Y)$ be an ordered bipartite graph and let $\tau$ be an $\mathcal H$-configuration. If there are two good cycles $C$ and $C'$ in $G_\tau$ which together contain all of the edges of its main cycles, as well as at least one cross edge, then for any weight function $w$, $(G_\tau,w)$ contains a good long cycle.
\end{lem}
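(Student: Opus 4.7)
The plan is to prove this by a short double-counting argument on edge weights, with no case analysis. The key observation is that the edge set of $G_\tau$ partitions into three pairwise disjoint parts: the edges of $C_1^\tau$, the edges of $C_2^\tau$, and the cross edges. The disjointness of the main cycles' edges follows from the fact that $V(C_1^\tau)$ and $V(C_2^\tau)$ are disjoint by construction (one consists of vertices $v_{x,y}$ with $x\in X$, the other of vertices $v_{y,x}$ with $y\in Y$), and the cross edges are declared separately from the main-cycle edges, so this remains valid even in the multigraph cases flagged by the footnote.

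First I would compute $w(C)+w(C')$ via inclusion--exclusion:
$$w(C)+w(C')=w\bigl(E(C)\cup E(C')\bigr)+w\bigl(E(C)\cap E(C')\bigr)\geq w\bigl(E(C)\cup E(C')\bigr).$$
By hypothesis, $E(C)\cup E(C')$ contains every edge of $C_1^\tau$, every edge of $C_2^\tau$, and at least one cross edge $e^*$. By the disjointness observed above, these contributions add, so
$$w\bigl(E(C)\cup E(C')\bigr)\geq w(C_1^\tau)+w(C_2^\tau)+w(e^*)\geq 2w(C_1^\tau)+1,$$
where the last inequality uses the defining property $w(C_1^\tau)=w(C_2^\tau)$ of a weight function together with the bound $w(e^*)\geq 1$ valid for cross edges.

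Combining the two displayed inequalities gives $w(C)+w(C')\geq 2w(C_1^\tau)+1$, hence by averaging at least one of $w(C),w(C')$ is strictly greater than $w(C_1^\tau)=w(C_2^\tau)$. The corresponding cycle is therefore long in the sense of the definition preceding the lemma, and since both $C$ and $C'$ are good by hypothesis, it is a long good cycle, as required.

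I do not expect any genuine obstacle here: the argument is essentially a one-paragraph weighting computation. The only point to be careful about is confirming that cross edges and main-cycle edges are never identified, so that the "$+1$" term arising from $w(e^*)$ is genuinely additive rather than already counted inside $w(C_1^\tau)+w(C_2^\tau)$; this is exactly what the edge-set partition above guarantees.
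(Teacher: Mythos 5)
Your argument is correct and is essentially the paper's proof: both sum $w(C)+w(C')$, lower-bound it by $w(C_1^\tau)+w(C_2^\tau)+w(e^*)>2w(C_1^\tau)$, and conclude by averaging that one of the two good cycles is long. The extra care you take about disjointness of the three edge classes is a sound (if brief) elaboration of the same computation.
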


\begin{proof}
    As $C$ and $C'$ together contain all edges of the main cycles, and at least one cross edge $e$ (which has non-zero weight), $$w(C)+w(C')\geq w(C_1)+w(C_2)+w(e)>2 w(C_1),$$
    and so $w(C)>w(C_1)$ or $w(C')>w(C_1)$.
\end{proof}

Given an ordered bipartite graph $\mathcal H=(H,\sigma_X,\sigma_Y)$ and an $\mathcal H$-configuration $\tau$, we define a linear program LP$_{\mathcal{H},\tau}$ as follows.
We create a variable $w_e\geq 0$ for each edge $e$ of $G_\tau$ and put the constraint $w_e\geq 1$ for all cross edges $e$ of $G_\tau$. We also have the following constraint that both main cycles must have the same weight:
\[\sum_{e\in E(C_1^\tau)}w_e= \sum_{e\in E(C_2^\tau)}w_e.
\]
Finally, for every good cycle $C$ in $G_\tau$, we create the following constraint:
\[\sum_{e\in E(C)}w_e\leq \sum_{e\in E(C_1^\tau)}w_e.
\]

We are only interested in the feasibility, so we may set the objective function to 1. This linear program is not feasible (there are no solutions) if and only if for every weight function $w$, $(G_\tau,w)$ contains a good long cycle. We record this fact below.
\begin{obs}\label{obs:linearprogram}
    Let $\mathcal H=(H,\sigma_X,\sigma_Y)$ be an ordered bipartite graph and let $\tau$ be an $\mathcal H$-configuration. Then LP$_{\mathcal{H},\tau}$ is infeasible if and only if for all weight functions $w$, $(G_\tau,w)$ contains a good long cycle.
\end{obs}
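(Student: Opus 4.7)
The plan is to recognise that the observation is essentially a tautology: the constraints of LP$_{\mathcal{H},\tau}$ were set up so as to encode, one for one, the conditions in the definition of a weight function on $G_\tau$ together with the absence of a long good cycle. Accordingly, the proof reduces to unpacking both sides and verifying the correspondence.

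The first step I would take is to establish a bijection between feasible points of LP$_{\mathcal{H},\tau}$ and weight functions $w$ on $G_\tau$ for which $(G_\tau,w)$ contains no long good cycle. For the forward direction, given a feasible $(w_e)_e$, I would check that the constraints $w_e\ge 0$, the constraint $w_e\ge 1$ for each cross edge $e$, and the equality $\sum_{e\in E(C_1^\tau)} w_e = \sum_{e\in E(C_2^\tau)} w_e$ together reproduce exactly the definition of a weight function, while the remaining inequalities $\sum_{e\in E(C)} w_e \le \sum_{e\in E(C_1^\tau)} w_e$, one per good cycle $C$, precisely assert that no good cycle is long. For the reverse direction, I would run the same dictionary backwards: any weight function $w$ on $G_\tau$ under which no good cycle is long automatically satisfies every constraint of LP$_{\mathcal{H},\tau}$.

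Once the bijection is in place, the biconditional follows immediately: LP$_{\mathcal{H},\tau}$ is infeasible if and only if no weight function on $G_\tau$ avoids having a long good cycle, which is the same as saying that for every weight function $w$, $(G_\tau,w)$ contains a long good cycle.

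There is no real obstacle; the entire argument is bookkeeping. The one subtlety worth flagging during the write-up is the strict versus weak inequality: a good cycle $C$ is \emph{long} when $w(C) > w(C_1^\tau)$, whereas the LP encodes its negation as $\le$. Since the negation of $>$ is $\le$, the translation is exact and no slack is introduced, so the two sides of the equivalence really do match on the nose.
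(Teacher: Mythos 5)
Your proposal is correct and matches the paper's (implicit) reasoning: the paper simply records this observation without proof, treating it as the immediate consequence of the fact that the LP constraints were designed to encode, constraint by constraint, exactly the definition of a weight function together with the assertion that no good cycle is long. Your care in noting that the negation of the strict inequality $w(C) > w(C_1^\tau)$ is precisely the weak inequality $w(C) \le w(C_1^\tau)$ used in the LP is the only point worth spelling out, and you handle it correctly.
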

\begin{proof}
By definition of LP$_{\mathcal{H},\tau}$ and by definition of a weight function, an edge-indexed vector $(w_e)_{e\in E(G_\tau)}$ is feasible for LP$_{\mathcal{H},\tau}$ if and only if (1) the corresponding mapping $w:E(G_\tau)\rightarrow \mathbb{R}$, defined as $w(e):=w_e$ for every $e$, is a weight function, and (2) we have $w(C)=\sum_{e\in E(C)}w_e\le \sum_{e\in E(C_1^\tau)}w_e=w(C_1^\tau)$ for every good cycle $C$. The latter is equivalent to saying that there exist no good long cycles. Hence, it follows that LP$_{\mathcal{H},\tau}$ is feasible if and only if there exists a weight function $w$ such that $(G_\tau,w)$ contains no good long cycle. The claim of the lemma now follows by negating bot sides of this equivalence.
\end{proof}
Given that there are many possible configurations to consider, the following lemma will allow us to significantly reduce computation time. A reader which is not interested in the computational efficiency can safely skip over the concepts of substructures that we define below and consider performing a brute-force search instead.

Given two ordered bipartite graphs $\mathcal H=(H,\sigma_X,\sigma_Y)$ and $\mathcal H'=(H',\sigma_X',\sigma_Y')$, we say that $\mathcal H'$ is a \emph{subgraph} of $\mathcal H$ if $H'$ is a subgraph of $H$ and we have the following conditions on the restrictions of the orders: $\sigma_X|_{X'}=\sigma_X'$ and $\sigma_Y|_{Y'}=\sigma_Y'$.

Furthermore, if $\mathcal H'$ is a subgraph of $\mathcal H$, $\tau$ is an $\mathcal H$-configuration, and $\tau'$ is an $\mathcal H'$-configuration, then we say that $\tau'$ is a \emph{subconfiguration} of $\tau$ if $\tau'_h=\tau_h|_{V(H')}$ for every $h\in V(H')$.

Finally, let $\tau'$ be a subconfiguration of $\tau$, $w$ be a weight function on $G_\tau$ and $w'$ be a weight function on $G_{\tau'}$.  We say that $w$ \emph{extends} $w'$ if these weight functions are compatible with this relation, that is if they respect the following two conditions.
\begin{itemize}
    \item If $e$ is a cross edge of $G_{\tau'}$ (which is necessarily also a cross edge of $G_\tau$), then $w'(e)=w(e)$.
    \item Let $i\in\{1,2\}$ and suppose that $v,w\in V(C_i^{\tau'})\subseteq V(C_i^{\tau})$ satisfy $vw\in E(C_i^{\tau'})$. If $P$ is the unique path\footnote{When $C_1^{\tau'}$ has only two vertices $x$ and $x'$, we distinguish ``the edge from $x$ to $x'$'' and ``the edge from $x'$ to $x$'' to obtain a unique path here. When  $C_1^{\tau'}$ contains only one vertex $x$, this path refers to the loop on $x$.} 
    between these vertices in $C_i^\tau$ that contains no other vertices of $C_i^{\tau'}$, then $w(P)=w'(vw)$.
\end{itemize}
We are now ready to show that it is sufficient to find good long cycles in subconfigurations.
\begin{lem}\label{lem:subconfig}
Let $\mathcal H'$ and $\mathcal H$ denote ordered subgraphs, $\tau'$ an $\mathcal H'$-configuration and $\tau$ an $\mathcal H$-configuration. Suppose that $\tau'$ is a subconfiguration of $\tau$ and $w$ is a weight function on the configuration graph $G_\tau$ which extends a weight function $w'$ on the configuration graph $G_{\tau'}$. 
If $(G_{\tau'},w')$ contains a good long cycle, then $(G_\tau,w)$ also contains a good long cycle.
\end{lem}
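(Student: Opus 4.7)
The plan is to prove \cref{lem:subconfig} by \emph{lifting} the assumed long good cycle $C$ of $(G_{\tau'}, w')$ to a cycle $\tilde{C}$ in $G_\tau$ and showing that $\tilde{C}$ is again long and good. Construct $\tilde{C}$ as follows: each cross edge of $C$, which is also a cross edge of $G_\tau$, is kept as-is, and each edge $vw$ of $C$ lying in $C_i^{\tau'}$ is replaced by the unique path $P_{vw}$ from $v$ to $w$ in $C_i^\tau$ avoiding all other vertices of $V(C_i^{\tau'})$ (the same path used in the definition of ``extends'').

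First, verify $\tilde{C}$ is a cycle in $G_\tau$. The subconfiguration property $\sigma_X|_{X'}=\sigma_X'$ and $\tau'_h=\tau_h|_{V(H')}$ ensures that the cyclic order on $V(C_i^{\tau'})$ induced by $C_i^\tau$ agrees with its cyclic order in $C_i^{\tau'}$; therefore the paths $P_{vw}$ associated to distinct edges used by $C$ in $C_i^{\tau'}$ are internally disjoint and meet exactly at shared endpoints of consecutive edges of $C$. The vertex-disjointness of $V(C_1^\tau)$ and $V(C_2^\tau)$ prevents interference between paths belonging to different main cycles and with the cross edges. Second, the weight extension conditions give $w(\tilde{C})=w'(C)$ edge by edge. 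Since the paths $\{P_{vw}:vw\in E(C_i^{\tau'})\}$ partition $E(C_i^\tau)$, we also have $w(C_i^\tau)=w'(C_i^{\tau'})$. Combining these,
\[
w(\tilde{C}) = w'(C) > w'(C_1^{\tau'}) = w(C_1^\tau),
\]
so $\tilde{C}$ is long in $(G_\tau,w)$.

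Finally, verify goodness. Suppose without loss of generality that $C$ has no dangerous $C_1^{\tau'}$-edges; then every $C_1^{\tau'}$-edge $vw$ in $C$ has endpoints $v_{x,y_1},v_{x,y_2}$ sharing the first coordinate $x$. The plan is to argue that the corresponding lifted path $P_{vw}$ stays within the ``$x$-block'' of $C_1^\tau$ (the maximal arc of vertices $v_{x,\cdot}$), so contains only safe $C_1^\tau$-edges, which would make $\tilde{C}$ good. The main obstacle is precisely this last step: one must confirm that the path $P_{vw}$ does not wrap around through other blocks. This requires carefully unpacking the definition of consecutive vertices in the cyclic lex order induced by $(\sigma_X',(\tau'_x)_{x\in X'})$ versus $(\sigma_X,(\tau_x)_{x\in X})$ and invoking the subconfiguration property to align the ``forward direction'' chosen for $P_{vw}$ with a traversal that remains inside the $x$-block; a separate treatment of the degenerate digon cases highlighted in the footnotes (where $C_i^{\tau'}$ has only one or two vertices and parallel edges are distinguished by direction) will likely be needed to rule out pathological lifts.
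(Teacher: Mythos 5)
Your lifting construction is exactly the paper's: keep each cross edge, replace each main-cycle edge $vw$ of $C$ by the unique arc of the corresponding main cycle in $G_\tau$ avoiding the other vertices of $C_i^{\tau'}$, use the extension conditions to get $w(\tilde C)=w'(C)$ and $w(C_i^\tau)=w'(C_i^{\tau'})$, and conclude $\tilde C$ is long. The paper disposes of goodness in a single sentence (``safe edges in $G_{\tau'}$ correspond to (paths consisting of) safe edges in $G_\tau$''), without examining when this could fail.

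The obstacle you flag is genuine, and it is worth pinning down exactly when it bites. If $v_{x,y_1}v_{x,y_2}$ is a safe edge of $C_1^{\tau'}$ and $y_1,y_2$ are consecutive in $\tau'_x$, then (since $\tau'_x=\tau_x|_{N_{H'}(x)}$) the lifted arc runs through $v_{x,y}$ for $y$ strictly between $y_1,y_2$ in $\tau_x$ and hence stays inside the $x$-block of $C_1^\tau$ --- only safe edges, as claimed. The only safe edge of $C_1^{\tau'}$ not of this form is the ``wrap-around'' edge, and that edge can be safe only when $x$ is the \emph{unique} vertex of $X'$ having a neighbour in $H'$. In that degenerate situation, if $X$ contains another non-isolated vertex, the wrap-around arc in $C_1^\tau$ does leave the $x$-block and does pick up dangerous edges, and the paper's one-line justification is false as stated; the same remark applies to the loop and digon footnote cases. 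So the lemma in its full stated generality needs the extra hypothesis that each side of $H'$ has at least two vertices of positive degree (or else a more careful argument). Fortunately, in every place the paper invokes the lemma this hypothesis holds: once to pass from $H\supseteq H'\in\{Q_3^+,K_{3,3}\}$ (three non-isolated vertices per side), and once inside the pruning loop, where one checks from \cref{tab:computationresults} that the first prune occurs only after both sides of $H_i$ have at least two non-isolated vertices. Thus your concern is not spurious --- it identifies a real informality shared by the paper's own write-up --- but it does not threaten the result; to complete the proof you would either add the nondegeneracy hypothesis to the statement and verify it at each point of use, or handle the wrap-around case separately.
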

\begin{proof}
    Any vertex of $G_{\tau'}$ has a corresponding vertex in $G_{\tau}$. When there is an edge between two vertices of $G_{\tau'}$ for which there is no edge in $G_\tau$, instead there will be a path following one of the main cycles in $G_\tau$. This gives a corresponding cycle in $G_\tau$ for each cycle in $G_{\tau'}$. By the definition of the extension of weight functions, these cycles always have the same weight. Furthermore, if a cycle is good in $G_{\tau'}$, then its corresponding cycle is good in $G_{\tau}$. This is because whether edges (that are not cross edges) are safe, depends on which vertices in the cycles they connect, and safe edges in $G_{\tau'}$ correspond to (paths consisting of) safe edges in $G_{\tau}$.  
    This proves the lemma.
\end{proof}

We note that there is another issue that we will need to address to reduce the computation time of our computer search. For an arbitrary bipartite graph $H$ with bipartition ($X,Y$), there could be up to $|X|!|Y|!$ possible ordered bipartite graphs. Luckily, symmetry can be used for the graphs of interest to us in order to reduce this number. 
We will detail the more general criterion in the proof of the main result of this section, which we now proceed with.

\begin{figure}
     \centering
     \captionsetup[subfigure]{justification=centering}
     \begin{subfigure}[t]{0.32\textwidth}
         \centering
         \begin{tikzpicture}[scale=1,
			dot/.style = {circle, fill, minimum size=#1,
			inner sep=0pt, outer sep=0pt},
			dot/.default = 4pt]

            \node[dot,label={left:$a_1$}] (A1) at (0,0) {};
            \node[dot,label={left:$a_2$}] (A2) at (0,1) {};
            \node[dot,label={left:$a_3$}] (A3) at (0,2) {};
            \node[dot,label={right:$b_1$}] (B1) at (2,0) {};
            \node[dot,label={right:$b_2$}] (B2) at (2,1) {};
            \node[dot,label={right:$b_3$}] (B3) at (2,2) {};
   
			\draw (A1) to (B1);
            \draw (A1) to (B2);
            \draw (A1) to (B3);
            \draw (A2) to (B1);
            \draw (A2) to (B2);
            \draw (A2) to (B3);
            \draw (A3) to (B1);
            \draw (A3) to (B2);
            \draw (A3) to (B3);
		\end{tikzpicture}
         \caption{Only ordered bipartite graph for $H=K_{3,3}$.}
     \end{subfigure}
     \hfill
     \begin{subfigure}[t]{0.33\textwidth}
         \centering
         \begin{tikzpicture}[scale=1,
			dot/.style = {circle, fill, minimum size=#1,
			inner sep=0pt, outer sep=0pt},
			dot/.default = 4pt]

            \node[dot,label={left:$a_1$}] (A1) at (0,0) {};
            \node[dot,label={left:$a_2$}] (A2) at (0,1) {};
            \node[dot,label={left:$a_3$}] (A3) at (0,2) {};
            \node[dot,label={left:$a_4$}] (A4) at (0,3) {};
            \node[dot,label={right:$b_1$}] (B1) at (2,0) {};
            \node[dot,label={right:$b_2$}] (B2) at (2,1) {};
            \node[dot,label={right:$b_3$}] (B3) at (2,2) {};
            \node[dot,label={right:$b_4$}] (B4) at (2,3) {};
   
			\draw (A1) to (B2);
            \draw (A1) to (B3);
            \draw (A1) to (B4);
            \draw (A2) to (B1);
            \draw (A2) to (B2);
            \draw (A2) to (B3);
            \draw (A2) to (B4);
            \draw (A3) to (B1);
            \draw (A3) to (B2);
            \draw (A3) to (B3);
            \draw (A4) to (B1);
            \draw (A4) to (B2);
            \draw (A4) to (B4);
		\end{tikzpicture}
         \caption{First ordered bipartite graph with $H=Q_3^+$.}
     \end{subfigure}
     \hfill
     \begin{subfigure}[t]{0.33\textwidth}
         \centering
         \begin{tikzpicture}[scale=1,
			dot/.style = {circle, fill, minimum size=#1,
			inner sep=0pt, outer sep=0pt},
			dot/.default = 4pt]

            \node[dot,label={left:$a_1$}] (A1) at (0,0) {};
            \node[dot,label={left:$a_2$}] (A2) at (0,1) {};
            \node[dot,label={left:$a_3$}] (A3) at (0,2) {};
            \node[dot,label={left:$a_4$}] (A4) at (0,3) {};
            \node[dot,label={right:$b_1$}] (B1) at (2,0) {};
            \node[dot,label={right:$b_2$}] (B2) at (2,1) {};
            \node[dot,label={right:$b_4$}] (B4) at (2,2) {};
            \node[dot,label={right:$b_3$}] (B3) at (2,3) {};
   
			\draw (A1) to (B2);
            \draw (A1) to (B3);
            \draw (A1) to (B4);
            \draw (A2) to (B1);
            \draw (A2) to (B2);
            \draw (A2) to (B3);
            \draw (A2) to (B4);
            \draw (A3) to (B1);
            \draw (A3) to (B2);
            \draw (A3) to (B3);
            \draw (A4) to (B1);
            \draw (A4) to (B2);
            \draw (A4) to (B4);
		\end{tikzpicture}
         \caption{Second ordered bipartite graph with $H=Q_3^+$.}
     \end{subfigure}
     
        \caption{The three ordered bipartite graphs $(H,\sigma_X,\sigma_Y)$ that need to be considered up to equivalence, as defined in \cref{lem:weightlongcycle}. The permutations $\sigma_X,\sigma_Y$ are implicit in the drawings.}
        \label{fig:configurations}
\end{figure}

\begin{lem}\label{lem:weightlongcycle}
    Let $\mathcal H=(H,\sigma_X,\sigma_Y)$ be an ordered bipartite graph such that either $Q_3^+$ or $K_{3,3}$ is a subgraph of $H$, and let $\tau$ be an $\mathcal H$-configuration. For any weight function $w$ on $G_\tau$, $(G_\tau,w)$ contains a good long cycle.
\end{lem}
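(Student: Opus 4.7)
The plan is to reduce this claim to a finite check carried out by computer, using the two available reduction tools (Lemma \ref{lem:subconfig} and symmetry) together with the two available certification methods (Lemma \ref{lem:pathcheck} and Observation \ref{obs:linearprogram}). The first reduction is via Lemma \ref{lem:subconfig}: given any $\mathcal H$ containing $H_0\in\{Q_3^+,K_{3,3}\}$ as a subgraph, choose a copy of $H_0$ in $H$ and let $\mathcal H'$ be the ordered bipartite graph obtained by restricting $\sigma_X,\sigma_Y$ to its bipartition, and let $\tau'$ be the subconfiguration obtained by restricting each $\tau_h$ to $N_{H_0}(h)$. Any weight function $w$ on $G_\tau$ induces in the natural way a weight function $w'$ on $G_{\tau'}$ that it extends, by assigning to each edge of $G_{\tau'}$ the $w$-weight of the corresponding path in $G_\tau$. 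A long good cycle in $(G_{\tau'},w')$ then lifts to one in $(G_\tau,w)$, so it suffices to handle the cases $H\in\{Q_3^+,K_{3,3}\}$.

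Second, I would use the automorphism groups of $Q_3^+$ and $K_{3,3}$, acting componentwise on the bipartition, to cut down the number of orderings $(\sigma_X,\sigma_Y)$ to consider. Relabelling by such an automorphism simultaneously permutes the orderings and the configuration $\tau$ and sends long good cycles to long good cycles, so only one ordering per orbit needs to be checked. For $K_{3,3}$ the group $S_3\times S_3$ acts transitively on ordered bipartite realizations, giving a single case; for $Q_3^+$ one obtains exactly two orbits, corresponding to the two drawings in Figure \ref{fig:configurations}. Thus a total of three ordered bipartite graphs $\mathcal H$ must be processed.

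For each of these three $\mathcal H$, I would enumerate all $\mathcal H$-configurations $\tau$ by brute force --- roughly $(3!)^6$ cases for $K_{3,3}$ and $(4!)^2(3!)^6$ for each $Q_3^+$ version, both computationally tractable. For each $\tau$ the algorithm would construct $G_\tau$, enumerate all its good cycles, and try to certify that $(G_\tau,w)$ contains a long good cycle for \emph{every} weight function $w$ in one of two equivalent ways: preferably, exhibit a pair of good cycles whose union contains $E(C_1^\tau)\cup E(C_2^\tau)$ plus at least one cross edge, so that Lemma \ref{lem:pathcheck} applies; and if no such pair exists, fall back on solving $\text{LP}_{\mathcal H,\tau}$ and checking infeasibility, which suffices by Observation \ref{obs:linearprogram}. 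A further use of Lemma \ref{lem:subconfig} allows pruning: any $\tau$ whose restriction to some previously certified ordered bipartite subgraph already produces a long good cycle can be skipped, which is essential to keep the $Q_3^+$ search manageable.

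The main obstacle is ensuring that one of the two certification methods succeeds for every single configuration in the enumeration: a failure would either mean that some good cycles were overlooked when building the LP (so its feasibility does not reflect reality), or that a genuine bad configuration exists, which would falsify the lemma. Because the certification methods and the symmetry/subconfiguration reductions are finite and mechanical, the substance of the proof lies in correctly implementing the enumeration and the two certifiers; the combinatorial content is then absorbed into verifying that no configuration among the three $\mathcal H$'s escapes certification.
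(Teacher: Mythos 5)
Your proposal follows essentially the same approach as the paper: reduce to $H\in\{Q_3^+,K_{3,3}\}$ via Lemma~\ref{lem:subconfig}, use symmetry to shrink the set of ordered bipartite graphs to three, and then run a finite computer search over $\mathcal H$-configurations, certifying each one via Lemma~\ref{lem:pathcheck} (with Lemma~\ref{lem:subconfig} used to prune by adding edges iteratively) or, as a fallback, via the LP of Observation~\ref{obs:linearprogram}. Your handling of the first reduction is correct and slightly more explicit than the paper: the induced weight $w'$ you describe is indeed a valid weight function (cross-edge bounds are inherited, and $w'(C_i^{\tau'})=w(C_i^\tau)$ since the induced path-weights partition each main cycle), so $w$ extends $w'$ and Lemma~\ref{lem:subconfig} applies.

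One imprecision: for the second reduction you claim that the automorphism group of $H$, acting componentwise on $(\sigma_X,\sigma_Y)$, yields exactly two orbits for $Q_3^+$. This cannot be right as stated: $|\mathrm{Aut}(Q_3^+)|=12$ while there are $4!\cdot 4!=576$ orderings, so that action alone produces at least $48$ orbits. The paper gets down to two classes by using a coarser equivalence: it builds, for each $(\sigma_X,\sigma_Y)$, an auxiliary graph consisting of $H$ plus a cycle on $X$ and a cycle on $Y$ in the given cyclic orders, and identifies two ordered bipartite graphs when these auxiliary graphs are isomorphic (allowing $X\to X$ or $X\to Y$). This implicitly captures the fact that only the cyclic order of each $\sigma$ matters (and allows order reversal), which your formulation omits. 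The discrepancy does not threaten correctness — running the certification on all orbits under the smaller group would still succeed — but it means your stated reduction does not actually produce the three-case count you quote, and one should either invoke the paper's auxiliary-graph equivalence or accept a larger (still finite) search.
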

\begin{proof}
    By applying \cref{lem:subconfig}, we may assume that $H\in \{Q_3^+,K_{3,3}\}$.

    In our code, we have first implemented checking for equivalence between ordered bipartite graphs. We generate all possible ordered bipartite graphs with $H\in \{Q_3^+,K_{3,3}\}$, and then test for equivalence: there are up to equivalence only two ordered bipartite graphs for $Q_3^+$, and of course only one for $K_{3,3}$, as depicted in \cref{fig:configurations}. 
    The equivalence is checked by verifying whether the ordered bipartite graphs admit the same collection of configurations graphs (up to graph isomorphism). For an ordered bipartite graph $\mathcal H=(H,\sigma_X,\sigma_Y)$, we place a cycle on the vertices in $X$ and a cycle on the vertices in $Y$ in the cyclic orders given by $\sigma_X$ and $\sigma_Y$ respectively (adding also the edge between the last and first vertex). 
    If, between two such auxiliary graphs, there exists an isomorphism such that $X$ gets mapped to $X$ or to $Y$, then the set of configuration graphs remains the same (up to relabelling). This follows directly from the definition of the configuration graphs.

    We then run a computer search to check that for every configuration $\tau$ on these three ordered bipartite graphs, every weighted configuration graph resulting from $\tau$ contains a good long cycle. As there are many configurations even on these three ordered bipartite graphs, we proceed by adding edges iteratively. Indeed, it is a direct consequence of \cref{lem:subconfig} that is it sufficient to check the condition for at least one subconfiguration of every possible configuration.
    \begin{enumerate}
        \item We fix an order on the edges of $H$. Let $\emptyset=H_0\subset H_1\subset\dots\subset H_{|E(H)|}=H$ be a sequence of subgraphs in which one edge is added at every step\footnote{Although our implementation does not optimize this order, it appears one can make the computation quicker by selecting an order on the edges of $H$ which prunes the most configurations earliest in the process. In particular, as is intuitive, maximizing the number of cycles appearing in $H_i$ in the first steps generally reduces the number of configurations that appear in later steps, with some caveats.}. We will write $e_i$ for the unique edge in $E(H_i)\setminus E(H_{i-1})$.
        \item At step $i=0$, let $S_0$ be the set containing only the empty configuration.
        \item At step $i\in \{1,\dots,|E(H)|\}$, we construct $S_i$ as follows. For each $(H_{i-1},\sigma_X,\sigma_Y)$-configuration $\tau'\in S_{i-1}$, consider every  $(H_{i},\sigma_X,\sigma_Y)$-configuration $\tau$ such that $\tau'$ is a sub-configuration of $\tau$. Let $R_i$ be the set of these configurations. Add $\tau$ to $S_i$ if the criterion of \cref{lem:pathcheck} cannot guarantee that every weighted configuration graph on $\tau$ would contain a good long cycle.
        \item Finally, we create a set $S$ as follows. For every $H$-configuration $\tau$ in $S_{|E(H)|}$, we add $\tau$ to $S$ if the more powerful linear program criterion from \cref{obs:linearprogram} cannot guarantee that every weighted configuration graph on $\tau$ would contain a good long cycle.
    \end{enumerate}

    If $S$ is empty, the theorem is a direct consequence of \cref{lem:subconfig,lem:pathcheck,obs:linearprogram}.

    This is indeed the case. We have implemented this algorithm in Mathematica \cite{wolfram_research_inc_mathematica_nodate}. The code is available at \cite{code}, and is thoroughly commented. A summary of results per step (number of configurations in $R_i$, $S_i$) is presented in \cref{tab:computationresults}. The computation took under 1h49m\footnote{Of course, the computation time could be greatly reduced through various optimizations, for instance by implementing the code in a lower level language or by using better pruning heuristics.} on a 2020 MacBook Air with M1 chip and 16 GB ram running Mathematica 13.0.0.0.
\end{proof}

\newcolumntype{?}{!{\vrule width 1pt}}
\renewcommand{\arraystretch}{1.2} 

\begin{table}
\captionsetup[subtable]{justification=centering}
\begin{subtable}{0.32\textwidth}
    \begin{tabular}{llll}
\Xcline{1-4}{1pt}
\multicolumn{1}{?c|}{$i$ } & \multicolumn{1}{c|}{$e_i$} & \multicolumn{1}{c|}{$|R_i|$}    & \multicolumn{1}{c?}{$|S_i|$}    \\ \Xcline{1-4}{1pt}
\multicolumn{1}{?c|}{$1$} & \multicolumn{1}{c|}{$\{a_1,b_1\}$} & \multicolumn{1}{c|}{$1$}    & \multicolumn{1}{c?}{$1$}   \\ \hline
\multicolumn{1}{?c|}{$2$} & \multicolumn{1}{c|}{$\{a_1,b_2\}$} & \multicolumn{1}{c|}{$2$}    & \multicolumn{1}{c?}{$2$}   \\ \hline
\multicolumn{1}{?c|}{$3$} & \multicolumn{1}{c|}{$\{a_1,b_3\}$} & \multicolumn{1}{c|}{$6$}    & \multicolumn{1}{c?}{$6$}   \\ \hline
\multicolumn{1}{?c|}{$4$} & \multicolumn{1}{c|}{$\{a_2,b_1\}$} & \multicolumn{1}{c|}{$12$}   & \multicolumn{1}{c?}{$12$}  \\ \hline
\multicolumn{1}{?c|}{$5$} & \multicolumn{1}{c|}{$\{a_2,b_2\}$} & \multicolumn{1}{c|}{$48$}   & \multicolumn{1}{c?}{$36$}  \\ \hline
\multicolumn{1}{?c|}{$6$} & \multicolumn{1}{c|}{$\{a_2,b_3\}$} & \multicolumn{1}{c|}{$216$}  & \multicolumn{1}{c?}{$120$} \\ \hline
\multicolumn{1}{?c|}{$7$} & \multicolumn{1}{c|}{$\{a_3,b_1\}$} & \multicolumn{1}{c|}{$360$}  & \multicolumn{1}{c?}{$360$} \\ \hline
\multicolumn{1}{?c|}{$8$} & \multicolumn{1}{c|}{$\{a_3,b_2\}$} & \multicolumn{1}{c|}{$2160$} & \multicolumn{1}{c?}{$684$} \\ \hline
\multicolumn{1}{?c|}{$9$} & \multicolumn{1}{c|}{$\{a_3,b_3\}$} & \multicolumn{1}{c|}{$6156$} & \multicolumn{1}{c?}{$72$}  \\ \Xcline{1-4}{1pt}
                          &              & \multicolumn{1}{?c|}{$|S|$}    & \multicolumn{1}{c?}{0}     \\ \Xcline{3-4}{1pt}
\end{tabular}
\caption{Results for the ordered bipartite graph with $H=K_{3,3}$.}
\end{subtable}
\begin{subtable}{0.32\textwidth}
    \begin{tabular}{llll}
\Xcline{1-4}{1pt}
\multicolumn{1}{?c|}{$i$ } & \multicolumn{1}{c|}{$e_i$} & \multicolumn{1}{c|}{$|R_i|$}    & \multicolumn{1}{c?}{$|S_i|$}    \\ \Xcline{1-4}{1pt}
\multicolumn{1}{?c|}{$1$} & \multicolumn{1}{c|}{$\{a_1,b_2\}$} & \multicolumn{1}{c|}{$1$}    & \multicolumn{1}{c?}{$1$}   \\ \hline
\multicolumn{1}{?c|}{$2$} & \multicolumn{1}{c|}{$\{a_1,b_3\}$} & \multicolumn{1}{c|}{$2$}    & \multicolumn{1}{c?}{$2$}   \\ \hline
\multicolumn{1}{?c|}{$3$} & \multicolumn{1}{c|}{$\{a_2,b_2\}$} & \multicolumn{1}{c|}{$4$}    & \multicolumn{1}{c?}{$4$}   \\ \hline
\multicolumn{1}{?c|}{$4$} & \multicolumn{1}{c|}{$\{a_2,b_3\}$} & \multicolumn{1}{c|}{$16$}   & \multicolumn{1}{c?}{$12$}  \\ \hline
\multicolumn{1}{?c|}{$5$} & \multicolumn{1}{c|}{$\{a_3,b_2\}$} & \multicolumn{1}{c|}{$36$}   & \multicolumn{1}{c?}{$36$}  \\ \hline
\multicolumn{1}{?c|}{$6$} & \multicolumn{1}{c|}{$\{a_3,b_3\}$} & \multicolumn{1}{c|}{$216$}  & \multicolumn{1}{c?}{$120$} \\ \hline
\multicolumn{1}{?c|}{$7$} & \multicolumn{1}{c|}{$\{a_1,b_4\}$} & \multicolumn{1}{c|}{$360$}  & \multicolumn{1}{c?}{$360$} \\ \hline
\multicolumn{1}{?c|}{$8$} & \multicolumn{1}{c|}{$\{a_2,b_4\}$} & \multicolumn{1}{c|}{$2160$} & \multicolumn{1}{c?}{$684$} \\ \hline
\multicolumn{1}{?c|}{$9$} & \multicolumn{1}{c|}{$\{a_4,b_4\}$} & \multicolumn{1}{c|}{$2052$} & \multicolumn{1}{c?}{$2052$}  \\
\hline
\multicolumn{1}{?c|}{$10$} & \multicolumn{1}{c|}{$\{a_2,b_1\}$} & \multicolumn{1}{c|}{$8208$} & \multicolumn{1}{c?}{$8208$}  \\
\hline
\multicolumn{1}{?c|}{$11$} & \multicolumn{1}{c|}{$\{a_3,b_1\}$} & \multicolumn{1}{c|}{$49248$} & \multicolumn{1}{c?}{$8838$}  \\
\hline
\multicolumn{1}{?c|}{$12$} & \multicolumn{1}{c|}{$\{a_4,b_1\}$} & \multicolumn{1}{c|}{$53028$} & \multicolumn{1}{c?}{$7492$}  \\
\hline
\multicolumn{1}{?c|}{$13$} & \multicolumn{1}{c|}{$\{a_4,b_2\}$} & \multicolumn{1}{c|}{$89904$} & \multicolumn{1}{c?}{$12$}  \\\Xcline{1-4}{1pt}
                          &              & \multicolumn{1}{?c|}{$|S|$}    & \multicolumn{1}{c?}{0}     \\ \Xcline{3-4}{1pt}
\end{tabular}
\caption{Results for the first ordered bipartite graph with $H=Q_3^+$.}
\end{subtable}
\begin{subtable}{0.32\textwidth}
    \begin{tabular}{llll}
\Xcline{1-4}{1pt}
\multicolumn{1}{?c|}{$i$ } & \multicolumn{1}{c|}{$e_i$} & \multicolumn{1}{c|}{$|R_i|$}    & \multicolumn{1}{c?}{$|S_i|$}    \\ \Xcline{1-4}{1pt}
\multicolumn{1}{?c|}{$1$} & \multicolumn{1}{c|}{$\{a_1,b_2\}$} & \multicolumn{1}{c|}{$1$}    & \multicolumn{1}{c?}{$1$}   \\ \hline
\multicolumn{1}{?c|}{$2$} & \multicolumn{1}{c|}{$\{a_1,b_3\}$} & \multicolumn{1}{c|}{$2$}    & \multicolumn{1}{c?}{$2$}   \\ \hline
\multicolumn{1}{?c|}{$3$} & \multicolumn{1}{c|}{$\{a_2,b_2\}$} & \multicolumn{1}{c|}{$4$}    & \multicolumn{1}{c?}{$4$}   \\ \hline
\multicolumn{1}{?c|}{$4$} & \multicolumn{1}{c|}{$\{a_2,b_3\}$} & \multicolumn{1}{c|}{$16$}   & \multicolumn{1}{c?}{$12$}  \\ \hline
\multicolumn{1}{?c|}{$5$} & \multicolumn{1}{c|}{$\{a_3,b_2\}$} & \multicolumn{1}{c|}{$36$}   & \multicolumn{1}{c?}{$36$}  \\ \hline
\multicolumn{1}{?c|}{$6$} & \multicolumn{1}{c|}{$\{a_3,b_3\}$} & \multicolumn{1}{c|}{$216$}  & \multicolumn{1}{c?}{$120$} \\ \hline
\multicolumn{1}{?c|}{$7$} & \multicolumn{1}{c|}{$\{a_1,b_4\}$} & \multicolumn{1}{c|}{$360$}  & \multicolumn{1}{c?}{$360$} \\ \hline
\multicolumn{1}{?c|}{$8$} & \multicolumn{1}{c|}{$\{a_2,b_4\}$} & \multicolumn{1}{c|}{$2160$} & \multicolumn{1}{c?}{$684$} \\ \hline
\multicolumn{1}{?c|}{$9$} & \multicolumn{1}{c|}{$\{a_4,b_4\}$} & \multicolumn{1}{c|}{$2052$} & \multicolumn{1}{c?}{$2052$}  \\
\hline
\multicolumn{1}{?c|}{$10$} & \multicolumn{1}{c|}{$\{a_2,b_1\}$} & \multicolumn{1}{c|}{$8208$} & \multicolumn{1}{c?}{$8208$}  \\
\hline
\multicolumn{1}{?c|}{$11$} & \multicolumn{1}{c|}{$\{a_3,b_1\}$} & \multicolumn{1}{c|}{$49248$} & \multicolumn{1}{c?}{$11586$}  \\
\hline
\multicolumn{1}{?c|}{$12$} & \multicolumn{1}{c|}{$\{a_4,b_1\}$} & \multicolumn{1}{c|}{$69516$} & \multicolumn{1}{c?}{$508$}  \\
\hline
\multicolumn{1}{?c|}{$13$} & \multicolumn{1}{c|}{$\{a_4,b_2\}$} & \multicolumn{1}{c|}{$6096$} & \multicolumn{1}{c?}{$0$}  \\\Xcline{1-4}{1pt}
                          &              & \multicolumn{1}{?c|}{$|S|$}    & \multicolumn{1}{c?}{0}     \\ \Xcline{3-4}{1pt}
\end{tabular}
\caption{Results for the second ordered bipartite graph with $H=Q_3^+$.}
\end{subtable}

\caption{Summary of results per step for the computer-search algorithm presented in the proof of \cref{lem:weightlongcycle}. Refer to \cref{fig:configurations} for the vertex labels.}
\label{tab:computationresults}
\end{table}

\section{Proof of \cref{lem:separator_size}}\label{sec:lemmaproof}

We will now deduce \cref{lem:separator_size} from \cref{lem:weightlongcycle}. 

\begin{proof}[Proof of \cref{lem:separator_size}]
Let $C_1$ and $C_2$ be two longest cycles in a graph $G$ such that $|V(C_1) \cap V(C_2)| = k>0$. Let $S$ be a minimum vertex separator between $A = V(C_1) \setminus V(C_2)$ and $B=V(C_2)\setminus V(C_1)$ in~$G$. Suppose towards a contradiction that $|S| \geq  \ex(2k, \{Q_3^+, K_{3, 3}\})+1$. 

By Menger's Theorem, there are at least $\ex(2k, \{Q_3^+, K_{3, 3}\}) + 1$ vertex-disjoint paths with one endpoint in $A$ and one endpoint in $B$. Fix such a collection $\mathcal{P}$. We also fix total orderings on $C_1$ and $C_2$ by choosing a smallest element and a direction.

We define an ordered bipartite graph as follows.
Let $X$ be the set of connected components of $C_1 - V(C_2)$ and let $Y$ be the set of connected components of $C_2 - V(C_1)$. The sets $X,Y$ inherit a total order from  $C_1,C_2$ respectively.

We add an edge between $x\in X$ and $y\in Y$ if there is some $P \in \mathcal{P}$ with one endpoint of $P$ in the connected component $x$ and the other endpoint in $y$. 

For each edge $xy$ with $x\in X$ and $y\in Y$, there is a unique such path that we will denote by $P_{xy}$. Indeed, suppose on the contrary there were two such paths $P_{xy}$ and $P_{xy}'$. Let $v$ and $v'$ be the endpoints of $P_{xy}$ and $P_{xy}'$ respectively in $x$ and let $u, u'$ be the endpoints of $P_{xy}, P_{xy}'$ respectively in $y$. Let $Q$ be the path between $v$ and $v'$ in $x$ and $Q'$ be the path between $u$ and $u'$ in $y$. This gives two cycles 
\begin{align*}
    &Q \rightarrow P_{xy} \rightarrow (C_2 \setminus Q') \rightarrow P_{xy}'\rightarrow \text{ and }\\
    &Q'\rightarrow P_{xy} \rightarrow (C_1 \setminus Q)\rightarrow P_{xy}'\rightarrow
\end{align*}
which together cover all edges of $C_1\cup C_2$, plus at least two more. Hence one of the two cycles above is longer than $C_1$ and $C_2$, contradicting that they are longest cycles. Thus, there is a unique such path. 

This concludes the description of the ordered bipartite graph $\mathcal{H}=(X,Y,E)$. By what we have shown in the previous paragraph, we have $|E|=|\mathcal{P}|$.

We next define an $\mathcal{H}$-configuration $\tau$. 
Let $x \in X$. In the configuration graph $G_\tau$ of $\tau$, there will be $|N_H(x)|$ copies of $x$ in $C_1^\tau$, one per neighbour $y\in Y$ of $x$.
We order the neighbours of $x$ according to the order in which these ``edges'' (paths from $\mathcal{P}$) intersect the original cycle $C_1$. To be precise, we consider the paths $P_{u_1,v_1},\dots,P_{u_d,v_d}\in \mathcal{P}$ incident with the connected component of $C_1\setminus C_2$ corresponding to $x\in X$, with $u_1,\dots,u_d\in V(C_1)$ numbered in order of the total ordering we fixed on $C_1$. As we argued above, $v_1,\dots,v_d$ are vertices in distinct components $y_1,\dots,y_d\in Y$, and this is the order we place on the neighbours of $x$. 

\begin{figure}
\tikzset{every picture/.style={line width=0.75pt}} 

\includegraphics[scale=0.22]{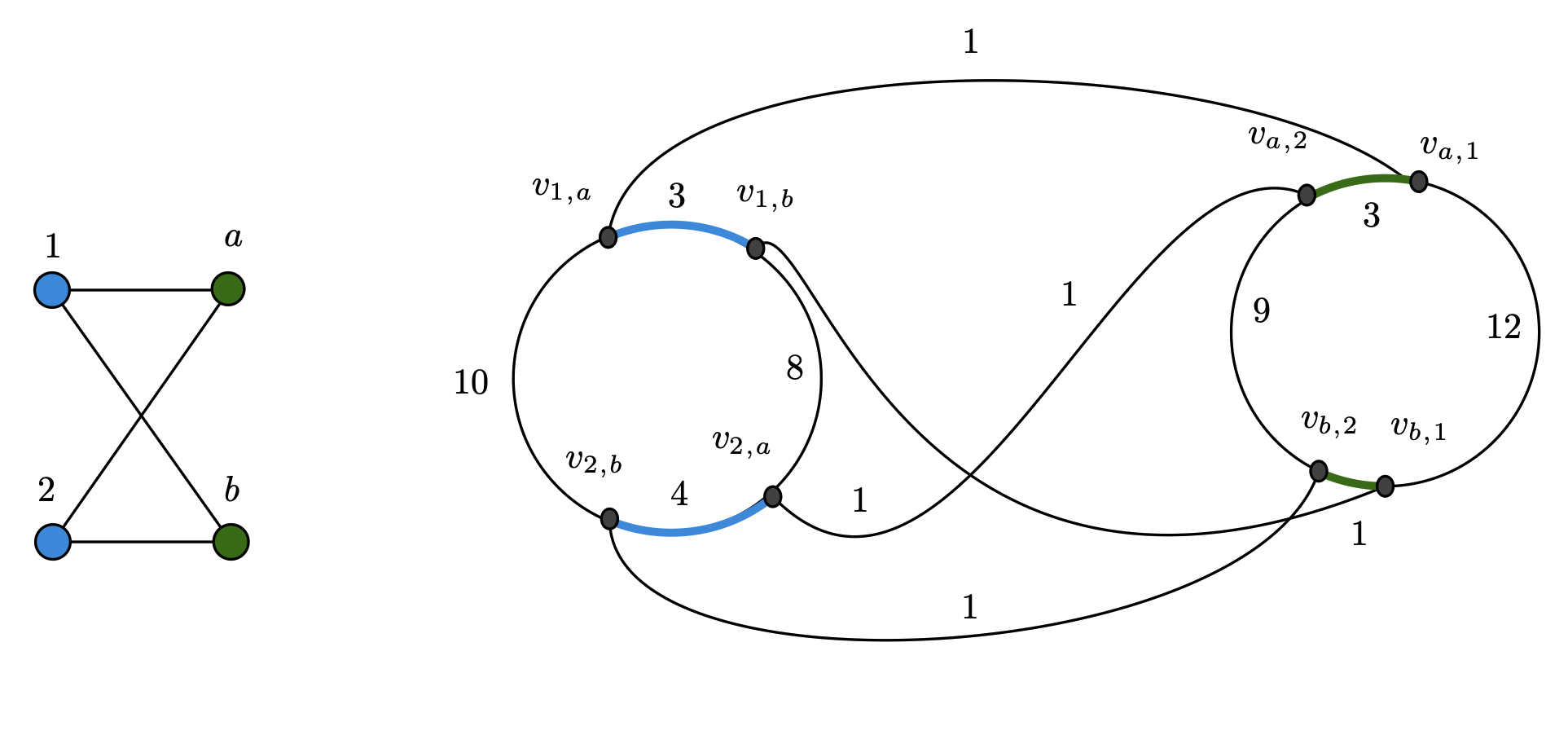}
\caption{The above depicts a bipartite graph $H$  (left) together with a possible configuration graph $G_\tau$ (right). The coloured edges are safe.}
\label{fig:configuration_example}
\end{figure}

We obtain a configuration graph $G_{\tau}$ with main cycles $C_1^{\tau}$ and $C_2^{\tau}$ as described in Section~\ref{sec:defs}. We included an example in Figure~\ref{fig:configuration_example}. Recall that $C_1^\tau$ contains vertices $v_{x,y}$ for $x\in X$ and $y\in N_H(x)$. 

We define a weight function $w: E(G_{\tau}) \rightarrow \R_{\geq0}$, considering the two types of edges separately.
\begin{itemize}
    \item If $e=v_{x,y}v_{y,x}$ is a cross edge with $x\in X$ and $y\in Y$, then we have defined a path $P_{xy}\in \mathcal{P}$. We set $w(e)$ as the length of $P_{xy}$.
    \item Otherwise, the edge is contained in one of the main cycles. Since both cases are analogous, we describe the case for an edge  $e=v_{x,y}v_{x',y'}$ of $C_1^\tau$ with $x,x'\in X$ and $y,y'\in Y$. Let $u_{xy},u_{x'y'}\in V(C_1)$ denote the endpoints in $C_1$  of $P_{xy},P_{x'y'}\in \mathcal{P}$ respectively. We set $w(e)$ as the length of the arc of $C_1$ from $u_{xy}$ to $u_{x'y'}$, in the direction we chose for the total ordering on $C_1$.
\end{itemize} 

Since $|E|=|\mathcal{P}|\geq \ex(2k, \{ Q_{3}^+, K_{3, 3}\})+1$, it follows that the bipartite graph $H=(X,Y,E)$ contains either $Q_3^+$ or $K_{3, 3}$. Thus, by \cref{lem:weightlongcycle}, $(\mathcal{G}_\tau,W)$ contains a good long cycle $C^\tau$. The vertices $v^\tau_1,\dots,v^\tau_\ell$ of $C^\tau$ correspond to vertices $v_1,\dots,v_\ell$ of $G$. The path which was used to define the edge $e=v_i^\tau v_{i+1}^\tau$ of $C^\tau$ has length $w(e)$ and is between $v_i$ and $v_{i+1}$. Therefore, we obtain a closed walk $C$ in $G$ of length $w(C^{\tau})$. Since $C^\tau$ is long, $w(C^\tau)>w(C_1^\tau)$ and the length of $C$ is strictly larger than the length of $C_1$. What remains to show is that $C$ is in fact a cycle. The vertices $v_1,\dots,v_\ell$ are distinct because the vertices $v^\tau_1,\dots,v^\tau_\ell$ are distinct, but we need to show the paths between them do not intersect internally. The paths corresponding to cross edges are chosen to be internally vertex-disjoint and only intersect $V(C_1)\cup V(C_2)$ in their endpoints. For $i\in \{1,2\}$, the paths corresponding to edges from $C_i^\tau$ correspond to internally disjoint arcs of $C_i$. It remains to rule out intersections between the arcs of $C$ on $C_1$ and $C_2$: By the definition of good, there exists some $i \in \{1,2\}$ such that all arcs of $C_i$ that are used by $C$ are part of a connected component of $C_i-V(C_{3-i})$ and thus do not intersect any of the arcs that $C$ uses from $C_{3-i}$. This shows $C$ is indeed a cycle, contradicting $C_1$ being the longest cycle.

We conclude that $|S| \leq \ex(2k, \{ Q_{3}^+, K_{3, 3}\})$ as desired.
\end{proof}

\section{Further remarks}\label{sec:concludingremark}
In this paper, we presented progress on two old conjectures about longest cycles via Lemma~\ref{lem:separator_size}. Any further improvement on this lemma directly improves the bounds towards the conjectures, and in particular it is possible that our $O(k^{8/5})$ upper bound can be improved to $O(k)$. This would imply that every two longest cycles in an $r$-connected graph meet in at least $\Omega(r)$ vertices and that every connected vertex-transitive graph on $n\geq 3$ vertices has a cycle of length at least $\Omega(n^{2/3})$. 

One direction towards improving the bounds in Lemma~\ref{lem:separator_size} is to prove a result analogous to Lemma~\ref{lem:weightlongcycle} for a graph $H$ with a better extremal number than $Q_3^+$. In fact, Chen, Faudree and Gould~\cite{ChenFaudreeGould98} already took a similar approach back in 1998 and proved a result analogous to Lemma~\ref{lem:weightlongcycle} for $H=K_{3,257}$ without the aid of a computer, via a result analogous to our Lemma~\ref{lem:pathcheck}, the Erd\H{o}s-Szekeres theorem and the pigeonhole principle. The perhaps most natural attempt is to consider the complete bipartite graphs $K_{2,t}$, since their extremal functions satisfy $\ex(n,K_{2,t})=\Theta(n^{3/2})\ll n^{8/5}$. However, one can show that for every fixed integer $t\in \mathbb{N}$ there exist (weighted) configuration graphs on an ordered bipartite graph of $K_{2,t}$ without good long cycles. Similarly, by computer search, we found that Lemma~\ref{lem:weightlongcycle} is false when $Q_3^+$ is replaced by $Q_3$: there are (weighted) configuration graphs on an ordered bipartite graph of $Q_3$ without good long cycles. Indeed, these can be obtained with only slight modifications to the code in \cite{code}, see \cref{fig:hypercubebadexample} for an example. 

\begin{figure}
    \centering
    \begin{tikzpicture}[scale=1.3,
			dot/.style = {circle, fill, minimum size=#1,
			inner sep=0pt, outer sep=0pt},
			dot/.default = 4pt]

            \node[dot] (A1) at (0,0) {};
            \node[dot] (A2) at (1,0) {};
            \node[dot] (A3) at (2,0) {};
            \node[dot] (A4) at (3,0) {};
            \node[dot] (A5) at (4,0) {};
            \node[dot] (A6) at (5,0) {};
            \node[dot] (A7) at (6,0) {};
            \node[dot] (A8) at (7,0) {};
            \node[dot] (A9) at (8,0) {};
            \node[dot] (A10) at (9,0) {};
            \node[dot] (A11) at (10,0) {};
            \node[dot] (A12) at (11,0) {};

            \node[dot] (B1) at (0,-2) {};
            \node[dot] (B2) at (1,-2) {};
            \node[dot] (B3) at (2,-2) {};
            \node[dot] (B4) at (3,-2) {};
            \node[dot] (B5) at (4,-2)  {};
            \node[dot] (B6) at (5,-2) {};
            \node[dot] (B7) at (6,-2) {};
            \node[dot] (B8) at (7,-2) {};
            \node[dot] (B9) at (8,-2) {};
            \node[dot] (B10) at (9,-2) {};
            \node[dot] (B11) at (10,-2) {};
            \node[dot] (B12) at (11,-2) {};
   
			\draw (A1) to node[midway, above]{$21$} (A2);
            \draw (A2) to node[midway, above]{$1$} (A3);
            \draw[dashed] (A3) to node[midway, above]{$31$} (A4);
            \draw (A4) to node[midway, above]{$33$} (A5);
            \draw (A5) to node[midway, above]{$1$} (A6);
            \draw[dashed] (A6) to node[midway, above]{$70$} (A7);
            \draw (A7) to node[midway, above]{$1$} (A8);
            \draw (A8) to node[midway, above]{$3$} (A9);
            \draw[dashed] (A9) to node[midway, above]{$71$} (A10);
            \draw (A10) to node[midway, above]{$53$} (A11);
            \draw (A11) to node[midway, above]{$13$} (A12);
            \draw[dashed] (A12) to[out=110,in=70, looseness=0.3] node[below]{$38$} (A1);

            \draw (B1) to node[midway, below]{$29$} (B2);
            \draw (B2) to node[midway, below]{$39$} (B3);
            \draw[dashed] (B3) to node[midway, below]{$81$} (B4);
            \draw (B4) to node[midway, below]{$4$} (B5);
            \draw (B5) to node[midway, below]{$28$} (B6);
            \draw[dashed] (B6) to node[midway, below]{$33$} (B7);
            \draw (B7) to node[midway, below]{$1$} (B8);
            \draw (B8) to node[midway, below]{$1$} (B9);
            \draw[dashed] (B9) to node[midway, below]{$35$} (B10);
            \draw (B10) to node[midway, below]{$1$} (B11);
            \draw (B11) to node[midway, below]{$15$} (B12);
            \draw[dashed] (B12) to[out=-110,in=-70, looseness=0.3] node[above]{$69$} (B1);

            \draw (A3) to node[pos=0.94, xshift=2pt,yshift=5pt]{$1$} (B6);
            \draw (A2) to node[pos=0.87, xshift=2pt,yshift=5pt]{$1$} (B11);
            \draw (A1) to node[pos=0.2, xshift=-4pt,yshift=-4.5pt]{$1$} (B8);
            \draw (A6) to node[pos=0.7, xshift=-4pt,yshift=4pt]{$1$} (B3);
            \draw (A5) to node[pos=0.3, xshift=-1pt,yshift=6pt]{$1$} (B10);
            \draw (A4) to node[pos=0.12, xshift=-4pt,yshift=-4pt]{$1$} (B9);
            \draw (A7) to node[pos=0.87, xshift=0pt,yshift=7pt]{$1$} (B1);
            \draw (A8) to node[pos=0.2, xshift=-5pt,yshift=4pt]{$1$} (B5);
            \draw (A9) to node[pos=0.87, xshift=-1pt,yshift=6pt]{$1$} (B7);
            \draw (A10) to node[pos=0.87, xshift=-4pt,yshift=5pt]{$1$} (B2);
            \draw (A11) to node[pos=0.9, xshift=-6pt,yshift=5pt]{$12$} (B4);
            \draw (A12) to node[pos=0.5, xshift=-7pt,yshift=0pt]{$1$} (B12);
		\end{tikzpicture}
    \caption{Example of a weighted configuration graph on an ordered bipartite graph with $H=Q_3$, for which there does not exist any good long cycle. The weight edges are specified as edge labels. Dangerous edges are dashed.}
    \label{fig:hypercubebadexample}
\end{figure}

In the course of our research we observed that several other well-known open problems can be directly linked to lower bounds for the longest paths and cycles in connected vertex-transitive graphs. We believe it is worth recording these relationships here, as they might spark further progress on the problem in the future. 

We first need some notation. Given a graph $G$, a \emph{longest path transversal} in $G$ is defined as a set of vertices $S$ such that $S$ intersects every longest path in $G$. Denote by $\mathrm{lpt}(G)$ and $\mathrm{lct}(G)$ the minimum size of a longest path and cycle transversal in $G$, respectively. Furthermore, let us denote by $\mathrm{mipc}(G)$ and $\mathrm{micc}(G)$ respectively, the largest number $k\ge 1$ such that for every collection of $k$ longest paths (cycles, respectively) in the graph $G$, there exists a vertex that is common to all of these $k$ paths (cycles). If all longest paths (cycles) in $G$ meet in a single vertex, then we set $\mathrm{mipc}(G)=\infty$ or $\mathrm{micc}(G)=\infty$, respectively. We observe the following lower bounds on longest cycles and paths in vertex-transitive graphs in terms of these parameters. The two lower bounds in terms of $\mathrm{lpt}(G)$ and $\mathrm{lct}(G)$ were already stated in a slightly different language in the paper of DeVos~\cite{deVos}.
\begin{proposition}\label{prop:lowerbounds}
If $G$ is a connected vertex-transitive graph on $n\ge 3$ vertices, then $G$ contains a path of length at least $\frac{n}{\mathrm{lpt}(G)}-1$ and a cycle of length at least $\frac{n}{\mathrm{lct}(G)}$. Furthermore, $G$ contains a path of length at least $n^{1-1/\mathrm{mipc}(G)}-1$ and a cycle of length at least $n^{1-1/\mathrm{micc}(G)}$. 
\end{proposition}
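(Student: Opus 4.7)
The plan is to prove all four bounds by a unified counting strategy leveraging vertex-transitivity. The shared observation is this: for any vertex-transitive $G$, the automorphism group $\mathrm{Aut}(G)$ permutes the set $\mathcal{L}$ of longest paths (resp.\ the set $\mathcal{C}$ of longest cycles) and acts transitively on vertices, so every vertex lies in the same number $d$ of longest paths (resp.\ the same number $d'$ of longest cycles). Writing $\ell$ for the length of a longest path of $G$ (which therefore has $\ell+1$ vertices), double counting vertex/path incidences yields $nd = (\ell+1)|\mathcal{L}|$; the cycle analogue is $nd' = \ell\,|\mathcal{C}|$.

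First I would handle the two transversal bounds. A longest path transversal $S$ must hit every $P \in \mathcal{L}$, so $|\mathcal{L}| \leq \sum_{v \in S} d = d\,|S|$. Combined with $nd = (\ell+1)|\mathcal{L}|$ this gives $|S| \geq n/(\ell+1)$, and taking $|S| = \mathrm{lpt}(G)$ yields $\ell \geq n/\mathrm{lpt}(G) - 1$. The cycle case is identical except for the $+1$ bookkeeping and gives $\ell \geq n/\mathrm{lct}(G)$.

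For the two intersection bounds I would run a short probabilistic argument. Assume first $m := \mathrm{mipc}(G) < \infty$; in particular, by the maximality defining $\mathrm{mipc}(G)$, there are $m+1$ distinct longest paths with empty intersection, so $|\mathcal{L}| > m$. Draw $P_1, \ldots, P_m \in \mathcal{L}$ independently and uniformly at random. By vertex-transitivity, for every $v \in V(G)$,
\[
\Pr[v \in V(P_i)] = \frac{d}{|\mathcal{L}|} = \frac{\ell+1}{n},
\]
so the indicator $X_v$ of the event $v \in \bigcap_{i=1}^{m} V(P_i)$ has $\mathbb{E}[X_v] = ((\ell+1)/n)^m$. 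The distinct paths among $P_1,\dots,P_m$ form a family of at most $m$ longest paths; since $|\mathcal{L}| > m$, this family can be extended to $m$ distinct longest paths which, by the definition of $\mathrm{mipc}(G)$, share a vertex, and this vertex is then shared by $P_1,\dots,P_m$ as well. Hence $\sum_v X_v \geq 1$ almost surely, and linearity of expectation yields $n((\ell+1)/n)^m \geq 1$, i.e.\ $\ell \geq n^{1-1/m} - 1$. The cycle analogue is identical after replacing $\ell+1$ by $\ell$. The boundary case $\mathrm{mipc}(G) = \infty$ (and its cycle counterpart) is disposed of by noting that a vertex $u$ common to every longest path, together with vertex-transitivity, forces every vertex to lie on every longest path (apply to $u$ an automorphism sending it to a prescribed vertex), so $G$ has a Hamiltonian path and the bound $n-1$ holds.

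I do not expect a genuine obstacle: each inequality reduces to one line of counting or a single application of linearity of expectation. The only things worth double-checking are the $\pm 1$ bookkeeping distinguishing paths from cycles, and the boundary cases $\mathrm{mipc}(G), \mathrm{micc}(G) = \infty$.
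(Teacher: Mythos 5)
Your proof is correct and rests on the same underlying idea as the paper's: averaging over the vertex-transitive structure. The differences are cosmetic rather than structural. For the transversal bounds you double count vertex--path incidences (noting that vertex-transitivity forces every vertex to lie on the same number $d$ of longest paths), whereas the paper computes the expectation of $|V(\phi(P))\cap S|$ for a uniformly random automorphism $\phi$ applied to a fixed longest path $P$; for the intersection bounds you sample $m$ independent uniform longest paths, whereas the paper applies $m$ independent random automorphisms to a fixed $P$. Both routes produce the same marginal $\Pr[v\in V(\text{random longest path})]=(\ell+1)/n$ and both then invoke independence in the same way, so the calculations are interchangeable. One small point in your favour: you explicitly dispose of the boundary cases $\mathrm{mipc}(G)=\infty$ and $\mathrm{micc}(G)=\infty$ by showing that a vertex common to all longest paths (or cycles), pushed around by automorphisms, forces a Hamiltonian path (or cycle); the paper's proof sets $k=\mathrm{mipc}(G)$ and then draws $k$ random automorphisms, a step that tacitly assumes $k<\infty$ and leaves that Hamiltonian boundary case implicit.
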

\begin{proof}
It is easy to observe that $G$ must in fact be $2$-connected: Removing a leaf of a spanning tree of $G$ leaves a connected graph, and since $G$ is vertex-transitive, this means that remvoving \emph{any} vertex of $G$ leaves a connected graph. Hence, by Observation~\ref{obs:intersect} any two longest paths and any two longest cycles in $G$ intersect each other. A key observation is that if a mapping $\phi:V(G)\rightarrow V(G)$ is drawn uniformly at random from the set of all automorphisms of $G$, then, since $G$ is vertex-transitive, for every fixed vertex $u \in V(G)$, its image $\phi(u)$ is uniformly distributed among all vertices of $G$. 

The proofs of the two statements for paths and cycles are completely analogous, which is why in the following we only give the arguments for the two lower bounds on the maximum length of a path. Fix some longest path $P$ in $G$, as well as minimum-size longest path transversal $S$. Let $\phi$ be a random automorphism of $G$. Then clearly, the image $\phi(P)$ is also longest path in $G$ and thus must be hit by $S$. This implies
$$1\le \mathbb{E}[|V(\phi(P))\cap S|]=\sum_{v\in S}\mathbb{P}(v \in \phi(V(P)))= \sum_{v\in S}\sum_{u \in V(P)}{\mathbb{P}(\phi(u)=v)}=\frac{|S||V(P)|}{n}.$$ Rearranging now yields the desired lower bound $|P|=|V(P)|-1\ge \frac{n}{\mathrm{lpt}(G)}-1$.

Let us now move on to the second lower bound. Let $k=\mathrm{mipc}(G)$. Then every collection of $k$ longest paths in $G$ meet in a common vertex. Let $\phi_1,\ldots,\phi_k$ be $k$ independently and uniformly drawn random automorphisms of $G$. Then the images $\phi_1(P),\ldots,\phi_k(P)$ form a collection of $k$ longest paths in $G$ and thus must meet in a common vertex. This implies
$$1\le \mathbb{E}\left[\left|\bigcap_{i=1}^{k}V(\phi_i(P))\right|\right]=\sum_{v \in V(G)}\mathbb{P}\left[\bigwedge_{i=1}^{k}\{v\in V(\phi_i(P))\}\right]= \sum_{v\in V(G)}\prod_{i=1}^{k}\mathbb{P}[v\in V(\phi_i(P))]$$ $$= \sum_{v\in V(G)}\prod_{i=1}^{k}{\sum_{u \in V(P)}\mathbb{P}[\phi_i(u)=v]}= n\cdot \left(\frac{|V(P)|}{n}\right)^k=\frac{|V(P)|^k}{n^{k-1}}.$$ Rearranging now yields $|V(P)|\ge n^{1-1/k}$ and thus $|P|\ge n^{1-1/\mathrm{mipc}(G)}-1$, as claimed. This concludes the proof.
\end{proof}
Given the lower bounds on the length of longest cycles from Proposition~\ref{prop:lowerbounds}, it is natural to ask for the state of the art on bounds for the parameters $\mathrm{lpt}(G)$, $\mathrm{lct}(G)$, $\mathrm{mipc}(G)$,
$\mathrm{micc}(G)$, and we conclude with a discussion of these. 

A famous question asked by Gallai~\cite{gallai} in 1966 is whether $\mathrm{lpt}(G)=1$ for every connected graph $G$, i.e., whether all longest paths meet in one vertex. This was answered in the negative by Walther~\cite{walther}, who constructed a connected graph $G$ on $25$ vertices with $\mathrm{lpt}(G)=2$. A smaller counterexample on $12$ vertices was found independently by Walther and Zamfirescu~\cite{walther2,zamfirescu1976}. Later, Gr\"{u}nbaum~\cite{grunbaum} constructed a connected graph $G$ on $324$ vertices with $\mathrm{lpt}(G)=3$, and amazingly, this remains the best known lower bound for connected graphs to this date. In particular, it is still an open question, raised by both Walther and Zamfirescu~\cite{zamfirescu1972}, whether there exists an absolute constant $c>0$ such that $\mathrm{lpt}(G)\le c$ for every connected graph $G$. Note that this would immediately yield a lower bound of $\Omega(n)$ on the maximum length of paths in vertex-transitive graphs. The question of Walther and Zamfirescu is currently out of reach, and in fact, all the known general upper bounds for longest path and cycle transversals in connected  and $2$-connected graphs grow polynomially in $n$, see the recent sequence of papers~\cite{kiersteadren,long,rautenbachsereni} on this problem. The current state of the art bounds are $\mathrm{lpt}(G)\le 5n^{2/3}$ for all connected $n$-vertex graphs and $\mathrm{lct}(G)\le 5n^{2/3}$ for all $2$-connected $n$-vertex graphs by Kierstead and Ren~\cite{kiersteadren}. Unfortunately, these only provide lower bounds of order $\Omega(n^{1/3})$ on longest paths and cycles in vertex-transitive graphs, but improving the upper bounds on $\mathrm{lpt}(G)$ and $\mathrm{lct}(G)$ to $n^{o(1)}$, for instance, would be very interesting. 

Let us now turn to the parameters $\mathrm{mipc}(G)$ and $\mathrm{micc}(G)$. A well-known and longstanding conjecture, popularized by Zamfirescu since the 80s, asks whether every three longest paths in a connected graph must intersect in a common vertex, that is, whether $\mathrm{mipc}(G)\ge 3$ for every connected graph, see e.g.~\cite{axenovich,cerioli,chen17,derezende,Shabbir13,voss,zamfirescuagain}. By Proposition~\ref{prop:lowerbounds}, the truth of this conjecture would directly yield an improved lower bound of $\Omega(n^{2/3})$ for longest paths in vertex-transitive graphs. Similarly, it remains open whether $\mathrm{micc}(G)\ge 3$ for every $2$-connected graph $G$~\cite{Shabbir13}, which would give the same improved lower bound for longest cycles in vertex-transitive graphs. In fact, it is even possible that $\mathrm{mipc}(G)\ge 6$ for every connected $G$ and $\mathrm{micc}(G)\ge 6$ for every $2$-connected $G$, which would yield much improved lower bounds of $\Omega(n^{5/6})$ for longest paths and cycles in vertex-transitive graphs.

We believe that because of this, the problems of bounding the parameters $\mathrm{lpt}(G)$, $\mathrm{lct}(G)$, $\mathrm{mipc}(G)$, $\mathrm{micc}(G)$ on vertex-transitive graphs deserve further attention.

\subsubsection*{Acknowledgements} The fifth author would like to thank Matt DeVos for introducing  her to Lov\'asz's conjecture during the 2022 Barbados Graph Theory workshop. The second and fifth authors would also like to  thank Matija Bucic for many fruitful discussions on Lov\'asz's conjecture.  Part of the work leading to this paper was conducted during the 2024 Barbados Graph Theory Workshop. We would like to thank the organisers, Sergey Norin, Paul Seymour, and David Wood, as well as the other participants for creating a productive atmosphere. 

\bibliographystyle{abbrvurl}
\bibliography{refs}

\begin{thebibliography}{10}

\bibitem{AR}
N.~Alon and Y.~Roichman.
\newblock Random {Cayley} graphs and expanders.
\newblock {\em Random Structures \& Algorithms}, 5(2):271--284, 1994.
\newblock \href {https://doi.org/10.1002/rsa.3240050203} {\path{doi:10.1002/rsa.3240050203}}.

\bibitem{Alspach81}
B.~Alspach.
\newblock The search for long paths and cycles in vertex-transitive graphs and digraphs.
\newblock In K.~L. McAvaney, editor, {\em Combinatorial Mathematics VIII}, pages 14--22, Berlin, Heidelberg, 1981. Springer Berlin Heidelberg.
\newblock \href {https://doi.org/10.1007/BFb0091804} {\path{doi:10.1007/BFb0091804}}.

\bibitem{axenovich}
M.~Axenovich.
\newblock When do three longest paths have a common vertex?
\newblock {\em Discrete Mathematics, Algorithms and Applications}, 1(01):115--120, 2009.
\newblock \href {https://doi.org/10.1142/S1793830909000038} {\path{doi:10.1142/S1793830909000038}}.

\bibitem{Babai79}
L.~Babai.
\newblock Long cycles in vertex-transitive graphs.
\newblock {\em Journal of Graph Theory}, 3(3):301--304, 1979.
\newblock \href {https://doi.org/10.1002/jgt.3190030314} {\path{doi:10.1002/jgt.3190030314}}.

\bibitem{Babai95}
L.~Babai.
\newblock Automorphism groups, isomorphism, reconstruction.
\newblock In {\em Handbook of combinatorics, {V}ol.\ 1,\ 2}, pages 1447--1540. Elsevier Sci. B. V., Amsterdam, 1995.

\bibitem{Barat10}
J.~Barát and M.~Kriesell.
\newblock What is on his mind?
\newblock {\em Discrete Mathematics}, 310(20):2573--2583, 2010.
\newblock \href {https://doi.org/10.1016/j.disc.2010.06.007} {\path{doi:10.1016/j.disc.2010.06.007}}.

\bibitem{Bauer06}
D.~Bauer, H.~Broersma, and E.~Schmeichel.
\newblock Toughness in graphs---a survey.
\newblock {\em Graphs Combin.}, 22(1):1--35, 2006.
\newblock \href {https://doi.org/10.1007/s00373-006-0649-0} {\path{doi:10.1007/s00373-006-0649-0}}.

\bibitem{Bollobas}
B.~Bollob{\'a}s.
\newblock {\em Extremal graph theory}.
\newblock Courier Corporation, 2004.

\bibitem{Bondy}
J.~A. Bondy.
\newblock {\em Hamilton cycles in graphs and digraphs}.
\newblock Department of Combinatorics and Optimization, University of Waterloo, 1978.

\bibitem{Bondy95}
J.~A. Bondy.
\newblock Basic graph theory: paths and circuits.
\newblock In {\em Handbook of Combinatorics}. Elsevier, 1995.
\newblock URL: \url{https://dl.acm.org/doi/10.5555/233157.233163}.

\bibitem{brown66}
W.~G. Brown.
\newblock On graphs that do not contain a {T}homsen graph.
\newblock {\em Canadian Mathematical Bulletin}, 9(3):281--285, 1966.
\newblock \href {https://doi.org/doi:10.4153/CMB-1966-036-2} {\path{doi:doi:10.4153/CMB-1966-036-2}}.

\bibitem{cerioli}
M.~R. Cerioli and P.~T. Lima.
\newblock Intersection of longest paths in graph classes.
\newblock {\em Discrete Applied Mathematics}, 281:96--105, 2020.
\newblock LAGOS’17: IX Latin and American Algorithms, Graphs and Optimization Symposium, C.I.R.M., Marseille, France - 2017.
\newblock \href {https://doi.org/10.1016/j.dam.2019.03.022} {\path{doi:10.1016/j.dam.2019.03.022}}.

\bibitem{chen17}
G.~Chen, J.~Ehrenmüller, C.~G. Fernandes, C.~G. Heise, S.~Shan, P.~Yang, and A.~N. Yates.
\newblock Nonempty intersection of longest paths in series–parallel graphs.
\newblock {\em Discrete Mathematics}, 340(3):287--304, 2017.
\newblock \href {https://doi.org/10.1016/j.disc.2016.07.023} {\path{doi:10.1016/j.disc.2016.07.023}}.

\bibitem{ChenFaudreeGould98}
G.~Chen, R.~J. Faudree, and R.~J. Gould.
\newblock {Intersections of Longest Cycles in $k$-Connected Graphs}.
\newblock {\em Journal of Combinatorial Theory, Series B}, 72(1):143--149, 1998.
\newblock \href {https://doi.org/10.1006/jctb.1997.1802} {\path{doi:10.1006/jctb.1997.1802}}.

\bibitem{CHM}
D.~Christofides, J.~Hladk{\`y}, and A.~M{\'a}th{\'e}.
\newblock Hamilton cycles in dense vertex-transitive graphs.
\newblock {\em Journal of Combinatorial Theory, Series B}, 109:34--72, 2014.
\newblock \href {https://doi.org/10.1016/j.jctb.2014.05.001} {\path{doi:10.1016/j.jctb.2014.05.001}}.

\bibitem{Chvatal}
V.~Chvátal.
\newblock Tough graphs and hamiltonian circuits.
\newblock {\em Discrete Mathematics}, 5(3):215--228, 1973.
\newblock \href {https://doi.org/10.1016/0012-365X(73)90138-6} {\path{doi:10.1016/0012-365X(73)90138-6}}.

\bibitem{derezende}
S.~F. De~Rezende, C.~G. Fernandes, D.~M. Martin, and Y.~Wakabayashi.
\newblock Intersecting longest paths.
\newblock {\em Discrete Mathematics}, 313(12):1401--1408, 2013.
\newblock \href {https://doi.org/10.1016/j.disc.2013.02.016} {\path{doi:10.1016/j.disc.2013.02.016}}.

\bibitem{deVos}
M.~de~Vos.
\newblock Longer cycles in vertex transitive graphs, 2023.
\newblock \href {https://arxiv.org/abs/2302.04255} {\path{arXiv:2302.04255}}.

\bibitem{Dirac52}
G.~A. Dirac.
\newblock Some theorems on abstract graphs.
\newblock {\em Proceedings of the London Mathematical Society}, 3(1):69--81, 1952.
\newblock \href {https://doi.org/10.1112/plms/s3-2.1.69} {\path{doi:10.1112/plms/s3-2.1.69}}.

\bibitem{Draganic24}
N.~Draganić, R.~Montgomery, D.~{Munhá Correia}, A.~Pokrovskiy, and B.~Sudakov.
\newblock Hamiltonicity of expanders: optimal bounds and applications, 2024.
\newblock \href {https://arxiv.org/abs/2402.06603} {\path{arXiv:2402.06603}}.

\bibitem{ES-cubewithdiagonal}
P.~Erd\H{o}s and M.~Simonovits.
\newblock Some extremal problems in graph theory.
\newblock In {\em Combinatorial Theory and Its Applications, I.}, pages 377--390. North Holland, 1969.
\newblock URL: \url{https://old.renyi.hu/~miki/1970-22ErdSimCube.pdf}.

\bibitem{ES}
P.~Erd{\H{o}}s and M.~Simonovits.
\newblock Compactness results in extremal graph theory.
\newblock {\em Combinatorica}, 2:275--288, 1982.
\newblock \href {https://doi.org/10.1007/BF02579234} {\path{doi:10.1007/BF02579234}}.

\bibitem{furedi96}
Z.~F{\"u}redi.
\newblock An upper bound on zarankiewicz'problem.
\newblock {\em Combinatorics, Probability and Computing}, 5(1):29--33, 1996.
\newblock \href {https://doi.org/10.1017/S0963548300001814} {\path{doi:10.1017/S0963548300001814}}.

\bibitem{Furedi13}
Z.~F{\"u}redi.
\newblock On a theorem of erd{\H{o}}s and simonovits on graphs not containing the cube.
\newblock {\em Number theory, analysis, and combinatorics, De Gruyter Proc. Math}, pages 113--125, 2013.
\newblock \href {https://doi.org/10.1515/9783110282429.113} {\path{doi:10.1515/9783110282429.113}}.

\bibitem{gallai}
T.~Gallai.
\newblock Problem 4.
\newblock In P.~{E}rd\H{o}s and G.~Katona, editors, {\em Theory of Graphs, Proc. Tihany}, page 362. Academic Press, 1966.

\bibitem{GR}
C.~Godsil and G.~F. Royle.
\newblock {\em Algebraic graph theory}, volume 207.
\newblock Springer Science \& Business Media, 2001.
\newblock \href {https://doi.org/10.1007/978-1-4613-0163-9} {\path{doi:10.1007/978-1-4613-0163-9}}.

\bibitem{code}
C.~Groenland, S.~Longbrake, R.~Steiner, J.~Turcotte, and L.~Yepremyan.
\newblock Code used in the paper ``{Longest} cycles in vertex-transitive and highly connected graphs'', 2024.
\newblock URL: \url{https://github.com/tjeremie/Long-cycles}.

\bibitem{Groetschel84}
M.~Gr{\"o}tschel.
\newblock On intersections of longest cycles.
\newblock In B.~Bollob{\'a}s, editor, {\em Graph theory and combinatorics: proceedings of the Cambridge Combinatorial Conference in honour of Paul Erd{\"o}s}, pages 171 -- 189, 1984.
\newblock URL: \url{https://www.researchgate.net/publication/266281681_On_intersections_of_longest_cycles}.

\bibitem{grunbaum}
B.~Grünbaum.
\newblock Vertices missed by longest paths or circuits.
\newblock {\em Journal of Combinatorial Theory, Series A}, 17(1):31--38, 1974.
\newblock \href {https://doi.org/10.1016/0097-3165(74)90025-9} {\path{doi:10.1016/0097-3165(74)90025-9}}.

\bibitem{Gutierrez24}
J.~Gutiérrez and C.~Valqui.
\newblock On two conjectures about the intersection of longest paths and cycles.
\newblock {\em Discrete Mathematics}, 347(11):114148, 2024.
\newblock \href {https://doi.org/10.1016/j.disc.2024.114148} {\path{doi:10.1016/j.disc.2024.114148}}.

\bibitem{Haggkvist}
R.~Haggkvist.
\newblock Unsolved problem.
\newblock {\em Proc. Hunger. Collog. 5th Keszthely}, pages 1203--1204, 1976.

\bibitem{Hilbig}
F.~Hilbig.
\newblock {\em Kantenstrukturen in nichthamiltonschen Graphen}.
\newblock PhD thesis, Uitgever niet vastgesteld, 1986.

\bibitem{Jackson}
B.~Jackson.
\newblock Hamilton cycles in regular 2-connected graphs.
\newblock {\em Journal of Combinatorial Theory, Series B}, 29(1):27--46, 1980.
\newblock \href {https://doi.org/10.1016/0095-8956(80)90042-8} {\path{doi:10.1016/0095-8956(80)90042-8}}.

\bibitem{JLZ}
B.~Jackson, H.~Li, and Y.~Zhu.
\newblock Dominating cycles in regular 3-connected graphs.
\newblock {\em Discrete mathematics}, 102(2):163--176, 1992.
\newblock \href {https://doi.org/10.1016/0012-365X(92)90051-G} {\path{doi:10.1016/0012-365X(92)90051-G}}.

\bibitem{Jung}
H.~Jung.
\newblock Longest circuits in 3-connected graphs.
\newblock In {\em Finite and infinite sets}, pages 403--438. Elsevier, 1984.
\newblock \href {https://doi.org/10.1016/B978-0-444-86893-0.50032-2} {\path{doi:10.1016/B978-0-444-86893-0.50032-2}}.

\bibitem{kiersteadren}
H.~Kierstead and E.~Ren.
\newblock Improved upper bounds on longest-path and maximal-subdivision transversals.
\newblock {\em Discrete Mathematics}, 346(9):113514, 2023.
\newblock \href {https://doi.org/10.1016/j.disc.2023.113514} {\path{doi:10.1016/j.disc.2023.113514}}.

\bibitem{KLOS}
D.~K{\"u}hn, A.~Lo, D.~Osthus, and K.~Staden.
\newblock The robust component structure of dense regular graphs and applications.
\newblock {\em Proceedings of the London Mathematical Society}, 110(1):19--56, 2015.
\newblock \href {https://doi.org/10.1112/plms/pdu039} {\path{doi:10.1112/plms/pdu039}}.

\bibitem{Kutnar09}
K.~Kutnar and D.~Marušič.
\newblock Hamilton cycles and paths in vertex-transitive graphs—current directions.
\newblock {\em Discrete Mathematics}, 309(17):5491--5500, 2009.
\newblock Generalisations of de Bruijn Cycles and Gray Codes/Graph Asymmetries/Hamiltonicity Problem for Vertex-Transitive (Cayley) Graphs.
\newblock \href {https://doi.org/10.1016/j.disc.2009.02.017} {\path{doi:10.1016/j.disc.2009.02.017}}.

\bibitem{long}
J.~A. Long, K.~G. Milans, and A.~Munaro.
\newblock Sublinear longest path transversals.
\newblock {\em SIAM Journal on Discrete Mathematics}, 35(3):1673--1677, 2021.
\newblock \href {https://doi.org/10.1137/20M1362577} {\path{doi:10.1137/20M1362577}}.

\bibitem{Lovasz69}
L.~Lov{\'a}sz.
\newblock Combinatorial structures and their applications.
\newblock In {\em Proc. Calgary Internat. Conf., Calgary, Alberta}, volume 1970, pages 243--246, 1969.

\bibitem{rautenbachsereni}
D.~Rautenbach and J.-S. Sereni.
\newblock Transversals of longest paths and cycles.
\newblock {\em SIAM Journal on Discrete Mathematics}, 28(1):335--341, 2014.
\newblock \href {https://doi.org/10.1137/130910658} {\path{doi:10.1137/130910658}}.

\bibitem{Shabbir13}
A.~Shabbir, C.~T. Zamfirescu, and T.~I. Zamfirescu.
\newblock Intersecting longest paths and longest cycles: A survey.
\newblock {\em Electronic Journal of Graph Theory and Applications (EJGTA)}, 1(1):56--76, 2013.
\newblock \href {https://doi.org/10.5614/ejgta.2013.1.1.6} {\path{doi:10.5614/ejgta.2013.1.1.6}}.

\bibitem{stewart1995}
I.~A. Stewart and B.~Thompson.
\newblock On the intersections of longest cycles in a graph.
\newblock {\em Experimental Mathematics}, 4(1):41--48, 1995.
\newblock \href {https://doi.org/10.1080/10586458.1995.10504306} {\path{doi:10.1080/10586458.1995.10504306}}.

\bibitem{voss}
H.-J. Voss.
\newblock {\em Cycles and bridges in graphs}.
\newblock Kluwer Academic Publishers Dordrecht, 1991.

\bibitem{walther}
H.~Walther.
\newblock Über die nichtexistenz eines knotenpunktes, durch den alle längsten wege eines graphen gehen.
\newblock {\em Journal of Combinatorial Theory}, 6(1):1--6, 1969.
\newblock \href {https://doi.org/10.1016/S0021-9800(69)80098-0} {\path{doi:10.1016/S0021-9800(69)80098-0}}.

\bibitem{walther2}
H.~Walther.
\newblock {\em Anwendungen der Graphentheorie}.
\newblock Springer-Verlag, 2013.
\newblock \href {https://doi.org/10.1007/978-3-322-84013-4} {\path{doi:10.1007/978-3-322-84013-4}}.

\bibitem{Wei}
B.~Wei.
\newblock On the circumferences of regular 2-connected graphs.
\newblock {\em Journal of Combinatorial Theory, Series B}, 75(1):88--99, 1999.

\bibitem{wolfram_research_inc_mathematica_nodate}
{Wolfram Research Inc.}
\newblock Mathematica.
\newblock URL: \url{https://www.wolfram.com/mathematica/}.

\bibitem{zamfirescu1972}
T.~Zamfirescu.
\newblock A two-connected planar graph without concurrent longest paths.
\newblock {\em J. Combin. Theory B}, 13:116--121, 1972.
\newblock \href {https://doi.org/10.1016/0095-8956(72)90048-2} {\path{doi:10.1016/0095-8956(72)90048-2}}.

\bibitem{zamfirescu1976}
T.~Zamfirescu.
\newblock On longest paths and circuits in graphs.
\newblock {\em Mathematica Scandinavica}, 38(2):211--239, 1976.
\newblock \href {https://doi.org/10.7146/math.scand.a-11630} {\path{doi:10.7146/math.scand.a-11630}}.

\bibitem{zamfirescuagain}
T.~Zamfirescu.
\newblock Intersecting longest paths or cycles: a short survey.
\newblock {\em An. Univ. Craiova Ser. Mat. Inform}, 28(1-9):27, 2001.
\newblock URL: \url{http://tzamfirescu.tricube.de/TZamfirescu-151.pdf}.

\end{thebibliography}

\end{document}